\documentclass[reqno,11pt]{amsart}

\usepackage{amsmath,amssymb,amsthm}
\usepackage{geometry}
\usepackage[dvipsnames]{xcolor}
\usepackage{aliascnt}
\usepackage{hyperref}
\hypersetup{colorlinks=true, citecolor=blue, linkcolor=BrickRed, urlcolor=OliveGreen, pdfstartview={FitH}}

\newtheorem{theorem}{Theorem}[section]

\newaliascnt{proposition}{theorem}
\newtheorem{proposition}[proposition]{Proposition}
\aliascntresetthe{proposition}

\newaliascnt{lemma}{theorem}
\newtheorem{lemma}[lemma]{Lemma}
\aliascntresetthe{lemma}

\newaliascnt{corollary}{theorem}
\newtheorem{corollary}[corollary]{Corollary}
\aliascntresetthe{corollary}

\theoremstyle{remark}
\newaliascnt{remark}{theorem}
\newtheorem{remark}[remark]{Remark}
\aliascntresetthe{remark}

\usepackage[capitalise,noabbrev]{cleveref}

\crefname{theorem}{Theorem}{Theorems}
\Crefname{theorem}{Theorem}{Theorems}
\crefname{lemma}{Lemma}{Lemmas}
\Crefname{lemma}{Lemma}{Lemmas}
\crefname{corollary}{Corollary}{Corollaries}
\Crefname{corollary}{Corollary}{Corollaries}
\crefname{proposition}{Proposition}{Propositions}
\Crefname{proposition}{Proposition}{Propositions}
\crefname{remark}{Remark}{Remarks}
\Crefname{remark}{Remark}{Remarks}
\crefname{section}{Section}{Sections}
\Crefname{section}{Section}{Sections}

\usepackage{booktabs}

\usepackage{listings}
\DeclareMathOperator{\ord}{ord}
\DeclareMathOperator{\rad}{rad}
\DeclareMathOperator{\lcm}{lcm}

\newcommand{\N}{\mathbb{N}}
\newcommand{\Z}{\mathbb{Z}}
\newcommand{\R}{\mathbb{R}}
\newcommand{\vpp}[1]{\nu_{#1}}
\newcommand{\Pplus}{P^+}
\newcommand{\fpart}[1]{\left\{\!\left\{#1\right\}\!\right\}}
\newcommand{\defeq}{:=}
\newcommand{\ldefeq}{=:}
\newcommand{\term}[1]{\textit{#1}}

\title[Reciprocals in Missing-Digit Sets]{A Fixed-Prime Criterion for\\Reciprocals in Missing-Digit Sets}
\author[Scott Duke Kominers]{Scott Duke Kominers}
\address{Harvard Business School; Department of Economics and Center of Mathematical Sciences and Applications, Harvard University; and a16z crypto}
\email{kominers@fas.harvard.edu}
\thanks{I used LLMs to assist with some of the computations and coding in the preparation of this article, particularly GPT-5.4 Pro and Claude 4.6 Opus (both accessed via Poe with the support of Quora, where I am an advisor). I particularly thank Refine.ink for a suggestion that led to a substantial expansion of the scope of the main result, including the application presented in \cref{sec:structural-example}. The problem, methods, and eventual written form are my own; and of course any errors remain my responsibility. This work was conducted while I was visiting the Technological Innovation, Entrepreneurship, and Strategic Management (TIES) Group at the MIT Sloan School of Management; I greatly appreciates their hospitality.}

\subjclass[2020]{Primary 11A63; Secondary 11A07}
\keywords{missing-digit sets, deleted-digit sets, radix representations, rational points, multiplicative order, \(p\)-adic valuation, largest prime factor}

\begin{document}

\begin{abstract}
We prove a structural upper bound on the \(p\)-adic valuation of denominators of rationals belonging to a missing-digit set \(K_{m,D}\), generalizing a key step in recent work of Lin, Wu, and Yang~\cite{LWY} on reciprocals of factorials. For a rational \(\frac{r}{Q}\) with \(\gcd(Q,m)=1\) and a fixed prime \(p_0\nmid m\), membership in \(K_{m,D}\) forces \(\vpp{p_0}(Q)\) to be controlled by the \(p_0\)-adic valuation of the multiplicative order of \(m\) modulo the radical of \(Q\), with explicit overhead depending only on \(m\) and \(D\). Because the obstruction is stated at the level of a single denominator, a pair of sequence-specific valuation estimates converts it into an effective finiteness criterion for \(\left\{\frac{1}{a_n}:n\in\N\right\}\cap K_{m,D}\). Specializing to the case in which \(Q\) is the part of \(n!\) coprime to \(m\) recovers the fixed-prime step in the Lin--Wu--Yang argument. As applications, we treat reciprocals of superfactorials, products of polynomial values, and products of Fibonacci numbers. We also exhibit an exponential family---products of \((m^k-1)\)---for which the full structural criterion applies but a coarser largest-prime-factor formulation does not.
\end{abstract}

\maketitle

\section{Introduction}

Let \(m\ge 3\), let \(D\subset\{0,1,\dots,m-1\}\) with \(1<\#D<m\), and let \(K_{m,D}\subset[0,1]\) be the associated missing-digit set defined by
\[
K_{m,D}\defeq \left\{\sum_{j=1}^\infty \frac{a_j}{m^j} : a_j\in D \text{ for all } j\ge 1\right\}.
\]

Questions about intersections of the form
\[
\left\{\frac{1}{a_n}:n\in\N\right\}\cap K_{m,D}
\]
are typically delicate. For reciprocals of factorials (\(a_n=n!\)), Lin, Wu, and Yang~\cite{LWY} proved finiteness for every missing-digit set, and in the middle-third Cantor set \(C=K_{3,\{0,2\}}\) they determined the intersection exactly, answering a question posed by Jiang, Kong, Li, and Wang~\cite{Jiangetal}:
\begin{equation}
\left\{\frac{1}{n!}:n\in\N\right\}\cap C=\left\{\frac{1}{1!},\frac{1}{5!}\right\}=\left\{1,\frac{1}{120}\right\}.
\label{eq:lwy-intersection}
\end{equation}

Lin, Wu, and Yang~\cite{LWY} proved finiteness for factorials using a fixed-prime valuation growth argument: for each prime \(p_0\nmid m\), membership in \(K_{m,D}\) forces a bound on \(\vpp{p_0}(n!)\) that eventually contradicts its known growth. The present paper extracts the mechanism into an abstract criterion independent of the sequence. This makes the method reusable for broad classes of reciprocal sequences. Moreover, the structural criterion can be stronger than the coarser largest-prime-factor packaging that suffices in many natural examples.

The two key ingredients are Korobov's \cite{Korobov} digit-distribution estimate for purely periodic base-\(m\) expansions and the standard order-lifting formulas at prime powers. For the \(m\)-coprime part \(Q\) of a denominator and a fixed prime \(p_0\nmid m\), the relevant arithmetic cost is
\[
\vpp{p_0}\bigl(\ord_{\rad(Q)}(m)\bigr),
\]
which measures how much \(p_0\)-power divisibility the radical of \(Q\) can force into the period length; our main structural theorem shows that membership in \(K_{m,D}\) forces \(\vpp{p_0}(Q)\) to be bounded by this radical-order term, up to explicit overhead \(t(m,p_0)+\log_{p_0}(c_{m,D})\). Once this structural inequality is available, finiteness for a sequence of reciprocals \(\left\{\frac{1}{a_n}:n\in\N\right\}\) follows from a lower bound on \(\vpp{p_0}(Q_n)\) together with any upper bound on the radical-order term. A convenient corollary directly generalizing Lin, Wu, and Yang's method~\cite{LWY} leverages the obstruction using only the largest prime factor of \(Q\).

For comparison with~\cite{LWY}, we write
\[
M_n\defeq \frac{n!}{(n!,m^\infty)},
\]
where \((A,m^\infty)\defeq \prod_{p\mid m} p^{\nu_p(A)}\) denotes the full \(m\)-part of \(A\).
The factorial proof in~\cite{LWY} has two distinct steps. First, membership of \(\frac{1}{n!}\) in \(K_{m,D}\) forces a Korobov-type comparison between the period length modulo \(M_n\) and the period length modulo \(\rad(M_n)\); in the present notation this is the content of \cref{lem:korobov}, which attains a slightly sharper \(D\)-dependent constant \(c_{m,D}\) in place of the uniform constant used in~\cite{LWY}. Second, one chooses a fixed prime \(p_0\nmid m\) and proves that \(\nu_{p_0}(M_n)\) grows faster than the \(p_0\)-adic contribution coming from \(\ord_{\rad(M_n)}(m)\). Our analysis separates these two steps cleanly: \cref{prop:denominator-obstruction} is a denominator-level statement for an arbitrary \(m\)-coprime denominator \(Q\), and \cref{thm:fixed-prime-criterion-structural} then converts a pair of sequence-specific estimates
\[
\nu_{p_0}(Q_n)\ge \alpha(n),
\qquad
\nu_{p_0}\!\bigl(\ord_{\rad(Q_n)}(m)\bigr)\le \gamma(n)
\]
into a finiteness theorem. Thus the factorial case is recovered by substituting \(Q_n=M_n\), while the same obstruction applies verbatim to other sequences.

Reciprocals of superfactorials, products of polynomial values, and products of Fibonacci numbers can all be handled by our largest-prime-factor corollary. Meanwhile, the family \(\prod_{k=1}^n (m^k-1)\) shows that the structural term can remain logarithmic even when the naive bound coming from \(P^+(Q_n)\) is exponential. In that sense, the full structural theorem can be materially easier to verify than the coarse \(P^+\)-packaging.

The use of Korobov's digit-distribution estimate to study rational numbers in missing-digit sets originates with Shparlinski~\cite{Shparlinski}, who obtained quantitative bounds on the number of rationals of the form \(\frac{a}{q^k}\) (with \(\gcd(q,m)=1\)) belonging to \(K_{m,D}\), sharpening an earlier qualitative finiteness result of Schleischitz~\cite{Schleischitz}. The key observation of Lin, Wu, and Yang~\cite{LWY} was that the same Korobov-based comparison can be brought to bear on reciprocals of rapidly growing sequences such as \(n!\), where a fixed \(p\)-adic valuation of the denominator grows fast enough to overwhelm the \emph{constraint imposed by the radical}---more precisely, the corresponding prime-adic contribution to \(\ord_{\rad(Q_n)}(m)\), and hence in particular the largest-prime-factor bound used in that setting. Jiang, Kong, Li, and Wang~\cite{Jiangetal} studied the analogous intersection question for reciprocals of perfect squares by rather different methods; their approach exploits the arithmetic structure of a fixed quadratic denominator and does not require the rapid valuation growth on which~\cite{LWY} and the present criterion rely.

\section{Preliminaries}

This section collects the two main ingredients: a consequence of Korobov's digit-distribution estimate for base-\(m\) expansions of rationals (\cref{sec:korobov-estimate}), and the standard order-lifting lemmas for prime powers (\cref{sec:order-lifting}).

Maintaining the notation used in the Introduction, we fix a base \(m\ge 3\) and \(D\subset\{0,1,\dots,m-1\}\) with \(1<\#D<m\). The \term{missing-digit set} \(K_{m,D}\subset[0,1]\) is
\begin{equation}
K_{m,D}\defeq \left\{\sum_{j=1}^\infty \frac{a_j}{m^j} : a_j\in D \text{ for all } j\ge 1\right\}.
\label{eq:kmd-definition}
\end{equation}

\subsection{A consequence of Korobov's estimate}\label{sec:korobov-estimate}

For integers \(u,q\ge 1\) with \(\gcd(u,q)=1\), we write \(\ord_q(u)\) for the multiplicative order of \(u\) modulo \(q\), and we adopt the convention \(\ord_1(u)=1\). We write \(\rad(q)\) for the radical of \(q\).

\begin{lemma}\label{lem:korobov}
Set
\[
c_{m,D} \defeq 2(m-1)\min\left\{1,\frac{\# D}{m-\# D}\right\}
\]
and let \(0<r<q\) be integers with \(\gcd(mr,q)=1\).
If \(\frac{r}{q}\in K_{m,D}\), then
\[
\ord_q(m)\le c_{m,D}\cdot\ord_{\rad(q)}(m).
\]
\end{lemma}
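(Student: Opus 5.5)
Since \(\gcd(m,q)=1\), the expansion of \(\frac{r}{q}\) in base \(m\) is purely periodic with minimal period \(T\defeq\ord_q(m)\). Writing \(x_j\defeq \{m^{j}r/q\}\), the \((j+1)\)-st digit is \(a_{j+1}=\lfloor m x_j\rfloor\), and the orbit \(\{r m^j \bmod q: 0\le j<T\}\) has exactly \(T\) distinct residues. Membership in \(K_{m,D}\), together with uniqueness of the expansion (a rational with denominator coprime to \(m\) has no terminating expansion), forces every digit \(a_j\) to lie in \(D\). The plan is to count the orbit residues whose leading digit is forbidden, and show that this count cannot be zero unless \(T\) is small compared with \(\tau\defeq\ord_{\rad(q)}(m)\).

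\emph{Step 1 (structure of the orbit).} Let \(s=\rad(q)\). Since \(m^\tau\equiv 1 \pmod s\), the orbit \(H=\langle m\rangle\subset(\Z/q)^\times\) contains the subgroup \(H_1=\langle m^\tau\rangle\) of order \(T/\tau\). Every element of \(H_1\) is \(\equiv 1\pmod s\), so \(H_1\) lies in the kernel of reduction \((\Z/q)^\times\to(\Z/s)^\times\). Each coset \(r m^i H_1\) is therefore a set of \(T/\tau\) residues that are congruent modulo \(s\), and on it the map \(x\mapsto x-x_0\) lands in \(s\Z/q\Z\).

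\emph{Step 2 (Korobov-type estimate).} This is the step I expect to be the main obstacle. For each coset I want a lower bound on how many of its points fall in a forbidden interval \([d/m,(d+1)/m)\), \(d\notin D\). The first thing to try is a direct exponential-sum / Korobov estimate: the counting function of \(rm^jH_1/q\) in an interval of length \(1/m\) differs from \(\frac{T}{\tau m}\) by an amount controlled by \(\sum_h|\sum_{x\in H_1} e(hx/q)|\). That sum is small when \(T/\tau\) is large, because \(H_1\) sits inside the group \(1+s\Z/q\Z\), on which additive characters of conductor not dividing \(s\) cancel.

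A cleaner route is to use the fact that \(H_1\) is, up to cofactors, a subgroup of \(1+p^{a}\Z\) in each prime component. Elements of \(H_1\) are then quite regularly spaced modulo \(q\), so the points \(x_0 h/q\), \(h\in H_1\), form something close to an arithmetic progression with gap at most \(\tau/T\) times a constant. Once the gap is at most \(1/m\) per digit window, every window, and in particular each forbidden one, is hit.

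Quantitatively, I aim for the following bound: if \(T/\tau > c_{m,D}\), then some orbit point has a forbidden leading digit.

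\emph{Step 3 (role of the two constants).} The factor \(2(m-1)\) comes from counting discrepancy against \(m-1\) window boundaries, with a factor \(2\) for the two sides. The factor \(\min\{1,\#D/(m-\#D)\}\) comes from a complementary count. The proportion of orbit points that land in allowed windows must be \(1\), while an equidistribution error of size \(E\) only permits proportion at most \(\#D/m+E\). Comparing with the forbidden proportion \((m-\#D)/m\) then yields \(T\le c_{m,D}\tau\).

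I will take the weaker of the two bookkeeping versions, which gives exactly the constant \(c_{m,D}\). If the exponential-sum route proves too lossy, I will fall back on the arithmetic-progression description of \(H_1\) modulo each prime power and lift it through the CRT.
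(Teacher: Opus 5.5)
\textbf{There is a genuine gap: Step~2 is the entire content of the lemma, and you do not carry it out.} You list two possible routes but establish neither a discrepancy bound nor the constant.

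The paper does not prove this step from scratch either. It imports Korobov's Theorem~1: for every digit $j$, the count $N_j$ in one period satisfies $|N_j-\tau/m|\le(1-1/m)\tau'$ with $\tau'\le 2\ord_{\rad(q)}(m)$. It then does two lines of bookkeeping. A forbidden digit $i$ has $N_i=0$, which gives $\tau\le(m-1)\tau'$. Summing the upper bounds over $j\in D$ gives $\tau\le\frac{\#D(m-1)}{m-\#D}\tau'$. Both bounds hold, so one takes the \emph{smaller} constant; your ``weaker of the two'' would give the maximum, not $c_{m,D}$.

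Your Step~3 account of the constants is also off. The $m-1$ is $m\cdot(1-1/m)$, coming from the per-window counting error, not from $m-1$ window boundaries. The factor $2$ is the loss in $\tau'\le 2\ord_{\rad(q)}(m)$, not ``two sides.'' The character-sum route would not rescue this. It needs the same structural input described below to get any cancellation at all, and after an Erd\H{o}s--Tur\'an-type conversion it would give some unspecified absolute constant rather than exactly $c_{m,D}$.

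Your ``cleaner route'' is the right mechanism and would make the lemma self-contained, but Step~1 misses the key point. Knowing $H_1\subset 1+s\Z/q\Z$ says nothing about spacing, since an arbitrary $T/\tau$-element subset of $x_0+s\Z/q\Z$ can cluster. What you need is this: for odd $p$, the group $1+p\Z/p^e\Z$ is cyclic and its subgroups are exactly the $1+p^a\Z/p^e\Z$. So for odd $q$, the Chinese remainder theorem and a cardinality count give $H_1=1+g\Z/q\Z$ \emph{exactly}, with $g=\gcd(m^\tau-1,q)$. Each coset $x_0H_1$ is then $x_0+g\Z/q\Z$, i.e.\ $L=T/\tau$ points spaced exactly $1/L$ apart modulo $1$. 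Hence each window $[j/m,(j+1)/m)$ receives $\lfloor L/m\rfloor$ or $\lceil L/m\rceil$ of them, within $1-1/m$ of $L/m$. Summing over the $\tau$ cosets gives Korobov's inequality with $\tau'=\tau$.

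At $p=2$ this breaks, because $(\Z/2^e\Z)^\times$ is not cyclic. If $4\mid q$ and $m^\tau\equiv 3\pmod 4$, then $\langle m^\tau\rangle$ need not be an additive coset; for example, $\langle m^\tau\rangle=\{\pm1\}$ modulo $2^e$ with $e\ge 3$ when $m\equiv -1\pmod{2^e}$. You must pass to $\langle m^{2\tau}\rangle$. Since $m^2\equiv 1\pmod 8$, its $2$-part lies in the cyclic group $1+4\Z/2^e\Z$, so the argument applies, and it has at most $2\tau$ cosets in the orbit. That is exactly where the factor $2$ in $c_{m,D}$ comes from. With this supplied, the bookkeeping above completes the proof.
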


\begin{proof}
Because \(\gcd(q,m)=1\), the reduced fraction \(\frac{r}{q}\) has a purely periodic base-\(m\) expansion. Equivalently, in the long-division algorithm the remainders stay in \((\Z/q\Z)^\times\), and the period length is
\[
\ord_q(m)\ldefeq \tau.
\]

For each digit \(j\in\{0,1,\dots,m-1\}\), let \(N_j\) be the number of occurrences of \(j\) in one full period in the purely periodic base-\(m\) expansion of \(\frac{r}{q}\). Then
\[
\sum_{j=0}^{m-1} N_j=\tau,
\]
and if \(\frac{r}{q}\in K_{m,D}\), then \(N_j=0\) for all \(j\notin D\).

Korobov's Theorem~1 \cite{Korobov} applies precisely in this coprime, purely periodic setting: for the period of the base-\(m\) expansion of a reduced fraction \(\frac{r}{q}\) with \(\gcd(q,m)=1\), it furnishes an auxiliary parameter \(\tau'=\tau'(m,q)\) such that
\begin{equation}
\left|N_j-\frac{\tau}{m}\right|
\le
\left(1-\frac{1}{m}\right)\tau'
\qquad
(j=0,1,\dots,m-1).
\label{eq:korobov-tau-prime}
\end{equation}
The point needed here is that Korobov's explicit formula~\cite[Equation~(9)]{Korobov} bounds \(\tau'\) only through the squarefree kernel of the denominator:
\begin{equation}
\tau'\le 2\,\ord_{\rad(q)}(m).
\label{eq:korobov-radical-bound}
\end{equation}
Thus the dependence on the denominator enters only through \(\rad(q)\), not through the exponents of its prime factors.

Choose \(i\notin D\). Since \(N_i=0\), \eqref{eq:korobov-tau-prime} gives
\[
\frac{\tau}{m}\le \left(1-\frac{1}{m}\right)\tau',
\]
hence, we have
\begin{equation}
\tau\le (m-1)\tau'.
\label{eq:korobov-tau-bound-one}
\end{equation}
Also, summing the upper bound \eqref{eq:korobov-tau-prime} over \(j\in D\), we obtain
\[
\tau=\sum_{j\in D} N_j
\le
\#D\cdot \frac{\tau}{m}+\#D\cdot\left(1-\frac{1}{m}\right)\tau',
\]
leading to
\[
\tau\left(1-\frac{\# D}{m}\right)\le \# D\cdot\left(1-\frac{1}{m}\right)\tau';
\]
and thus,
\begin{equation}
\tau\le \frac{\#D(m-1)}{m-\#D}\,\tau'.
\label{eq:korobov-tau-bound-two}
\end{equation}
Combining \eqref{eq:korobov-tau-bound-one} and \eqref{eq:korobov-tau-bound-two} yields
\[
\tau\le (m-1)\min\left\{1,\frac{\#D}{m-\#D}\right\}\tau'
=
\frac{c_{m,D}}{2}\tau'.
\]
Using \eqref{eq:korobov-radical-bound} then gives
\[
\ord_q(m)=\tau\le c_{m,D}\cdot\ord_{\rad(q)}(m).
\qedhere
\]
\end{proof}

\begin{remark}
Since \(m\ge 3\) and \(1<\#D<m\), we always have \(c_{m,D}\ge 4\).
When \(\#D<\frac{m}{2}\), the refined constant
\[
c_{m,D}=2(m-1)\frac{\#D}{m-\#D}
\]
is strictly smaller than \(2(m-1)\).
\end{remark}

\subsection{Order lifting}\label{sec:order-lifting}

We now record the order-lifting facts needed for the fixed-prime comparison.

\begin{lemma}[Odd prime powers]\label{lem:odd-prime-powers}
Let \(p\) be an odd prime and fix \(a\in\Z\) with \(\gcd(a,p)=1\).
Set \(d=\ord_p(a)\) and \(t=\vpp{p}(a^d-1)\).
Then for every \(k\ge 1\),
\[
\ord_{p^k}(a)=
\begin{cases}
d, & 1\le k\le t,\\
d\,p^{k-t}, & k>t.
\end{cases}
\]
In particular,
\[
\vpp{p}\bigl(\ord_{p^k}(a)\bigr)\ge \max\{0,k-t\}.
\]
\end{lemma}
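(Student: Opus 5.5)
The plan is to prove the statement by induction on \(k\), using the lifting-the-exponent estimate for \(p\) odd. The key input is the following claim: if \(x\equiv 1 \pmod p\) and \(\vpp{p}(x-1)=s\ge 1\), then \(\vpp{p}(x^p-1)=s+1\), and \(\vpp{p}(x^j-1)=s\) whenever \(p\nmid j\). To prove the claim, write \(x=1+p^s u\) with \(p\nmid u\) and expand binomially:
\[
x^p=1+p^{s+1}u+\binom p2 p^{2s}u^2+\cdots .
\]
Since \(p\) is odd, \(p\mid\binom p2\), so every later term is divisible by \(p^{2s+1}\) (the final term \(p^{ps}u^p\) has \(ps\ge 2s+1\) because \(p\ge 3\)). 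Hence \(x^p\equiv 1+p^{s+1}u \pmod{p^{s+2}}\). For \(p\nmid j\), use \(x^j-1=(x-1)(1+x+\cdots+x^{j-1})\); the second factor is \(\equiv j\not\equiv 0 \pmod p\).

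Iterating the claim with \(x=a^d\) and \(s=t\ge1\) gives
\[
\vpp{p}(a^{dp^i}-1)=t+i \quad\text{for all } i\ge 0,
\qquad
\vpp{p}(a^{dp^i j}-1)=t+i \quad\text{for } p\nmid j.
\]

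Now fix \(k\) and determine \(\ord_{p^k}(a)\). Any \(n\) with \(a^n\equiv1\pmod{p^k}\) also satisfies \(a^n\equiv 1 \pmod p\), so \(d\mid n\). Write \(n=d\,p^i j\) with \(p\nmid j\); then \(\vpp{p}(a^n-1)=t+i\). So \(p^k\mid a^n-1\) holds if and only if \(i\ge k-t\).
\begin{itemize}
\item If \(k\le t\), the minimal such \(n\) is \(d\) (take \(i=0\), \(j=1\)).
\item If \(k>t\), the minimal such \(n\) is \(d\,p^{k-t}\), since \(j=1\) is optimal.
\end{itemize}
This yields the stated formula.

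For the final inequality, note that \(\gcd(d,p)=1\) because \(d\mid p-1\). Therefore \(\vpp{p}(\ord_{p^k}(a))=\max\{0,k-t\}\) exactly, and in particular it is at least this quantity.

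The main obstacle is the valuation bookkeeping in the binomial expansion, specifically checking \(2s+1\le s+2\) fails only harmlessly: we only need the higher terms to vanish modulo \(p^{s+2}\), which needs \(2s+1\ge s+2\), i.e. \(s\ge1\). This holds. The oddness of \(p\) is used exactly in handling the \(\binom p2\) term and the last term.
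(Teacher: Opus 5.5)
Your proof is correct: the paper does not prove the order formula itself but cites it as standard (Bloshchitsyn, Lemma~3; Lin--Wu--Yang, Eq.~(2.5)), then deduces the valuation bound from $p\nmid d$ (since $d\mid p-1$) exactly as you do, so your binomial-expansion/lifting-the-exponent argument just supplies the standard proof behind that citation. The only case your argument skips is $a=\pm1$, where $a^d=1$ and $t=\vpp{p}(0)=\infty$, so your claim (which needs a finite $s$ with $x=1+p^su$, $p\nmid u$) does not apply; there, however, $\ord_{p^k}(a)=d$ for every $k$ trivially, which is exactly what the first case of the formula asserts.
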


\begin{proof}
This is standard; see for example \cite[Lemma~3]{Bloshchitsyn} or \cite[Equation~(2.5)]{LWY}.
Since \(d=\ord_p(a)\mid (p-1)\), we have \(p\nmid d\), and the valuation statement follows immediately from the displayed formula.
\end{proof}

The analogous statement for \(p_0=2\) requires a slightly different formulation because \((\Z/2^k\Z)^\times\) is not cyclic for \(k\ge 3\).

\begin{lemma}[\(2\)-adic overhead]\label{lem:two-adic-overhead}
Let \(a\) be an odd integer with \(a\neq \pm 1\), and define
\[
t_2(a)\defeq \vpp{2}(a^2-1)-1=\vpp{2}(a-1)+\vpp{2}(a+1)-1.
\]
Then for every \(k\ge 1\),
\[
\vpp{2}\bigl(\ord_{2^k}(a)\bigr)\ge \max\{0,k-t_2(a)\}.
\]
Moreover, we have
\begin{equation}
t_2(a)=\max\{\vpp{2}(a-1),\vpp{2}(a+1)\}.
\label{eq:t2-maximum}
\end{equation}
\end{lemma}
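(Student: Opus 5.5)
We first establish the identity \eqref{eq:t2-maximum}, and then derive the valuation bound from the standard structure of powers of odd integers modulo powers of \(2\).

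\textbf{The identity \eqref{eq:t2-maximum}.} Since \(a\) is odd, \(a-1\) and \(a+1\) are consecutive even integers, so exactly one of them is divisible by \(4\). Hence
\[
\min\{\vpp{2}(a-1),\vpp{2}(a+1)\}=1.
\]
Because \(a\neq\pm1\), both \(a-1\) and \(a+1\) are nonzero, so both valuations are finite. Writing \(a^2-1=(a-1)(a+1)\) gives
\[
\vpp{2}(a^2-1)-1=\vpp{2}(a-1)+\vpp{2}(a+1)-1=\max\{\vpp{2}(a-1),\vpp{2}(a+1)\}.
\]
In particular \(t_2(a)\ge 2\), and \(t_2(a)=\vpp{2}(a^2-1)-1\) with \(\vpp{2}(a^2-1)\ge 3\).

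\textbf{The valuation bound.} The key input is the lifting-the-exponent formula for \(p=2\): for odd \(a\) and \(j\ge1\),
\[
\vpp{2}\bigl(a^{2^j}-1\bigr)=\vpp{2}(a^2-1)+(j-1)=t_2(a)+j.
\]
This follows by induction on \(j\) from the factorization \(a^{2^{j}}-1=(a^{2^{j-1}}-1)(a^{2^{j-1}}+1)\). For \(j\ge2\), the square \(a^{2^{j-1}}\) is congruent to \(1\) modulo \(4\), so the factor \(a^{2^{j-1}}+1\) has \(2\)-adic valuation exactly \(1\).

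Now fix \(k\ge1\). If \(k\le t_2(a)\), the bound \(\vpp{2}(\ord_{2^k}(a))\ge 0\) is trivial. Suppose instead that \(k>t_2(a)\). Since \((\Z/2^k\Z)^\times\) has order \(2^{k-1}\), the order \(\ord_{2^k}(a)\) is a power of \(2\), say \(2^{j}\). We have \(j\ge1\), because \(a\equiv1\pmod{2^k}\) would force \(\vpp{2}(a-1)\ge k>t_2(a)\ge\vpp{2}(a-1)\), which is impossible. Then \(2^k\mid a^{2^j}-1\), and the formula above gives \(t_2(a)+j\ge k\). Hence \(j\ge k-t_2(a)\), which is the claim.

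The only step requiring care is the case \(j=1\) in the lifting formula. There the statement \(\vpp{2}(a^2-1)=t_2(a)+1\) holds by definition, and it is exactly the place where the definition of \(t_2(a)\) through \(a^2-1\), rather than \(a-1\), absorbs the non-cyclicity of \((\Z/2^k\Z)^\times\). Everything else in the argument is routine.
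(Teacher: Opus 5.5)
Your proof is correct and follows essentially the same route as the paper: you use the $2$-adic lifting-the-exponent identity $\nu_2(a^{2^j}-1)=t_2(a)+j$ for $j\ge 1$ together with the fact that $\ord_{2^k}(a)$ is a power of $2$. Two small differences are worth noting: you prove the lifting identity by induction instead of citing it, and you explicitly rule out $\ord_{2^k}(a)=1$ when $k>t_2(a)$ via $t_2(a)\ge\nu_2(a-1)$, which fills in the paper's terse remark that the case $s=0$ is ``automatic''.
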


\begin{proof}
For every \(j\ge 1\), the \(2\)-adic lifting-the-exponent identity gives
\[
\vpp{2}(a^{2^j}-1)
=
\vpp{2}(a-1)+\vpp{2}(a+1)+j-1
=
t_2(a)+j.
\]
Now \(\ord_{2^k}(a)\) is a power of \(2\), say \(2^s\), because \((\Z/2^k\Z)^\times\) is a \(2\)-group.
Then \(a^{2^s}\equiv 1\pmod{2^k}\), so
\begin{equation}
\vpp{2}(a^{2^s}-1)\ge k.
\label{eq:two-adic-valuation-bound}
\end{equation}
If \(s\ge 1\), then \eqref{eq:two-adic-valuation-bound} yields
\[
t_2(a)+s\ge k;
\]
hence, \(s\ge k-t_2(a)\).
If \(s=0\), the stated bound is automatic.
Therefore
\[
\vpp{2}\bigl(\ord_{2^k}(a)\bigr)=s\ge \max\{0,k-t_2(a)\}.
\]

Finally, \((a-1)/2\) and \((a+1)/2\) are consecutive integers, so exactly one of them is even.
Thus
\[
\min\{\vpp{2}(a-1),\vpp{2}(a+1)\}=1,
\]
and \(t_2(a)\) reduces to the maximum~\eqref{eq:t2-maximum}.
\end{proof}

\section{A denominator obstruction and the sequence criterion}

Our main argument proceeds in three steps:
\begin{enumerate}
\item First, we reduce membership in \(K_{m,D}\) to a statement about the denominator after removing its \(m\)-part (\cref{sec:reduction}).
\item Next, we obtain a structural obstruction for a single denominator by combining Korobov's estimate with order-lifting (\cref{sec:single-denominator}).
\item Then, we convert the single-denominator obstruction into a sequence criterion and derive our largest-prime-factor corollary (\cref{sec:main-result}).
\end{enumerate}

\subsection{Reduction to an \(m\)-coprime denominator}\label{sec:reduction}

\begin{lemma}\label{lem:reduction}
Let \(A\ge 1\), and put
\[
Q\defeq \frac{A}{(A,m^\infty)}.
\]
Then \(\gcd(Q,m)=1\) by construction.
Assume \(Q\ge 2\).
If \(\frac{1}{A}\in K_{m,D}\), then there exists an integer \(r\) with
\[
0<r<Q,
\qquad
\gcd(mr,Q)=1,
\qquad
\frac{r}{Q}\in K_{m,D}.
\]
\end{lemma}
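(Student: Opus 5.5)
Write \(A=BQ\) with \(B\defeq(A,m^\infty)\), so every prime factor of \(B\) divides \(m\). Choose \(L\ge 0\) with \(B\mid m^L\); for instance \(L=\max_{p\mid m}\vpp{p}(B)\) works. Put \(s\defeq m^L/B\in\Z_{\ge1}\). Multiplying \(\frac1A\) by \(m^L\) gives
\[
m^L\cdot\frac{1}{A}=\frac{s}{Q}.
\]
The plan is to use the shift invariance of \(K_{m,D}\): if \(x=\sum_{j\ge1}a_jm^{-j}\) with all \(a_j\in D\), then
\[
m^Lx=\sum_{j=1}^{L}a_jm^{L-j}+\sum_{j\ge1}a_{j+L}m^{-j}.
\]
The first sum is an integer \(N\). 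The second sum, call it \(y\), lies in \(K_{m,D}\).

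Define \(r\) to be the residue of \(s\) modulo \(Q\). Since \(\gcd(s,Q)\) divides \(\gcd(m^L,Q)=1\), we have \(\gcd(s,Q)=1\). Because \(Q\ge2\), this forces \(0<r<Q\). Moreover \(\gcd(m,Q)=1\) by construction, so \(\gcd(mr,Q)=1\).

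It remains to show \(\frac rQ=y\). We know \(y=\frac{s}{Q}-N\), and \(y\) lies in \(K_{m,D}\subset[0,1]\). Also \(y\equiv \frac{s}{Q}\pmod 1\), and \(\frac sQ\) is not an integer because \(Q\ge2\) and \(\gcd(s,Q)=1\). So \(y\notin\{0,1\}\), hence \(y\in(0,1)\). Therefore \(y\) equals the fractional part of \(\frac sQ\), which is \(\frac rQ\). This gives \(\frac rQ\in K_{m,D}\).

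The only delicate point is this endpoint issue: a point of \(K_{m,D}\) can equal \(1\), for example when \(m-1\in D\) and all digits are \(m-1\). That case is excluded exactly by non-integrality, which in turn uses \(Q\ge2\) and the coprimality of \(s\) and \(Q\). Everything else is bookkeeping.
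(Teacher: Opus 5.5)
Your proof is correct and is essentially the paper's argument: you scale \(\frac{1}{A}\) by a power of \(m\) that absorbs the \(m\)-part, shift the digit string, and identify the tail with the fractional part \(\frac{r}{Q}\) of \(\frac{s}{Q}\); here \(\gcd(r,Q)=\gcd(s,Q)=1\) because \(r\equiv s \pmod Q\), a step you used without stating it. The only difference is in how the endpoint is handled. You work directly with the given \(D\)-digit expansion and exclude \(y\in\{0,1\}\) because \(\frac{s}{Q}\) is not an integer, whereas the paper notes that \(\frac{1}{A}\) has a non-terminating, hence unique, expansion, so that \(T^k\) acts as a shift. Your version is slightly more self-contained.
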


\begin{proof}
Since \(Q>1\) by hypothesis, the reduced denominator \(A\) of \(\frac{1}{A}\) has a prime factor not dividing \(m\); hence, \(\frac{1}{A}\) does not have a terminating base-\(m\) expansion.

We choose \(k\ge 0\) such that \((A,m^\infty)\mid m^k\), and define
\[
B\defeq \frac{m^k}{(A,m^\infty)}\in\N.
\]
Because \(Q\) is coprime to \(m\), we have
\[
\frac{m^k}{A}=\frac{B}{Q},
\qquad
\gcd(B,Q)=1.
\]

Let \(T:[0,1)\to[0,1)\) be the map
\[
T(x)=\fpart{mx},
\]
where \(\fpart{\cdot}\) denotes fractional part.
If \(x\in[0,1)\) has a nonterminating base-\(m\) expansion with all digits in \(D\), then \(T^k(x)\in K_{m,D}\) for every \(k\ge 0\).\footnote{We shall use \(T\) only for rationals whose reduced denominator does not divide any power of \(m\), so no ambiguity of base-\(m\) expansion arises.}

By shift-invariance of nonterminating digit expansions inside \(K_{m,D}\), we have
\[
T^k\left(\frac{1}{A}\right)=\fpart{\frac{m^k}{A}}\in K_{m,D}.
\]
Let \(r\) be the least positive residue of \(B\) modulo \(Q\).
Since \(\gcd(B,Q)=1\), we have \(0<r<Q\) and \(\gcd(r,Q)=1\), and
\[
\fpart{\frac{B}{Q}}=\frac{r}{Q}.
\]
Thus \(\frac{r}{Q}\in K_{m,D}\).
Finally, \(\gcd(mr,Q)=1\) because both \(\gcd(m,Q)=1\) and \(\gcd(r,Q)=1\) hold.
\end{proof}

\begin{remark}
If \(Q=1\), then the reduced denominator of \(\frac{1}{A}\) divides a power of \(m\), so \(\frac{1}{A}\) lies in the terminating regime rather than the purely periodic regime.
As we see in the sequel, the Korobov argument applies only after passing to the \(m\)-coprime denominator \(Q\ge 2\).
\end{remark}

\subsection{A single-denominator obstruction}\label{sec:single-denominator}

With the reduction to the \(m\)-coprime part in hand, we can state the key arithmetic obstruction.

Fix a prime \(p_0\) with \(\gcd(p_0,m)=1\), and define the overhead parameter \(t=t(m,p_0)\) by
\begin{equation}
t\defeq
\begin{cases}
\vpp{p_0}\bigl(m^{\ord_{p_0}(m)}-1\bigr) & p_0 \text{ odd},\\
\vpp{2}(m^2-1)-1 & p_0=2.
\end{cases}
\label{eq:overhead-parameter}
\end{equation}

\begin{proposition}[Structural denominator obstruction]\label{prop:denominator-obstruction}
Let \(p_0\nmid m\) be a prime, and let \(t=t(m,p_0)\) be as in \eqref{eq:overhead-parameter}.
Let \(Q\ge 2\) be an integer with \(\gcd(Q,m)=1\), and let \(r\) be an integer with \(0<r<Q\) and \(\gcd(mr,Q)=1\).
If \(\frac{r}{Q}\in K_{m,D}\), then
\[
\vpp{p_0}(Q)
\le
t+\vpp{p_0}\bigl(\ord_{\rad(Q)}(m)\bigr)+\log_{p_0}(c_{m,D}).
\]
Equivalently, we have
\[
\vpp{p_0}(Q)
\le
t+\max_{p\mid Q}\left\{\vpp{p_0}\bigl(\ord_p(m)\bigr)\right\}+\log_{p_0}(c_{m,D}).\footnote{Here and hereafter, \(p\mid\cdot\) is interpreted as indexing over prime numbers \(p\).}
\]
\end{proposition}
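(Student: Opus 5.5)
The plan is to squeeze \(\vpp{p_0}(\ord_Q(m))\) from both sides, then compare. Write \(k\defeq\vpp{p_0}(Q)\). If \(k=0\) there is nothing to prove, since the right-hand side is nonnegative: \(t\ge 1\) when \(p_0\) is odd, and \(t\ge 1\) for \(p_0=2\) because \(8\mid m^2-1\) for odd \(m\); moreover \(c_{m,D}\ge 4\). So assume \(k\ge 1\).

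\textbf{Lower bound.} Since \(p_0^k\mid Q\), the order \(\ord_{p_0^k}(m)\) divides \(\ord_Q(m)\). Applying \cref{lem:odd-prime-powers} (for \(p_0\) odd, with \(a=m\)) or \cref{lem:two-adic-overhead} (for \(p_0=2\), with \(a=m\), which is odd and \(\ne\pm1\) because \(m\ge 3\)), the definition \eqref{eq:overhead-parameter} of \(t\) matches \(t_2(m)\) in the second case. This gives
\[
\vpp{p_0}\bigl(\ord_Q(m)\bigr)\ge \vpp{p_0}\bigl(\ord_{p_0^k}(m)\bigr)\ge k-t .
\]

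\textbf{Upper bound.} Every prime of \(Q\) divides \(\rad(Q)\), so \(\rad(Q)\mid Q\) and hence \(\ord_{\rad(Q)}(m)\mid \ord_Q(m)\). Write
\[
\ord_Q(m)=\ord_{\rad(Q)}(m)\cdot L .
\]
By \cref{lem:korobov}, applicable since \(0<r<Q\) and \(\gcd(mr,Q)=1\), we get \(L\le c_{m,D}\). Hence
\[
\vpp{p_0}(L)\le \log_{p_0} L\le \log_{p_0}(c_{m,D}),
\qquad
\vpp{p_0}\bigl(\ord_Q(m)\bigr)=\vpp{p_0}\bigl(\ord_{\rad(Q)}(m)\bigr)+\vpp{p_0}(L).
\]
Combining with the lower bound yields the first inequality. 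The key point is to use divisibility, so that the ratio is an integer, rather than just the size comparison.

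\textbf{Equivalent form.} For squarefree \(\rad(Q)=\prod p\), the Chinese remainder theorem gives \(\ord_{\rad(Q)}(m)=\lcm_{p\mid Q}\ord_p(m)\). The \(p_0\)-adic valuation of an lcm is the maximum of the valuations.

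The only delicate point is the \(p_0=2\) bookkeeping and the \(k=0\) edge case. Otherwise the argument is a direct assembly of the earlier lemmas.
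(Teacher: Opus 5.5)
Your proof is correct and is essentially the paper's argument: you write \(\ord_Q(m)=L\cdot\ord_{\rad(Q)}(m)\) with \(L\) an integer, bound \(\vpp{p_0}(L)\le\log_{p_0}(c_{m,D})\) via \cref{lem:korobov}, and get the lower bound \(\vpp{p_0}(\ord_Q(m))\ge k-t\) from order lifting at \(p_0^k\). Your CRT/\(\lcm\) justification of the equivalent max-form is a small addition that the paper leaves implicit.
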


\begin{proof}
We set
\[
\tau_Q\defeq \ord_Q(m),
\qquad
\tau_*\defeq \ord_{\rad(Q)}(m).
\]
As \(\rad(Q)\mid Q\), we have \(\tau_*\mid \tau_Q\), so
\[
L\defeq \frac{\tau_Q}{\tau_*}\in\N.
\]
By \cref{lem:korobov}, we have
\[
L\le c_{m,D};
\]
hence, we obtain
\[
\vpp{p_0}(L)\le \log_{p_0}(c_{m,D}).
\]

Now let \(k=\vpp{p_0}(Q)\). If \(k=0\), then the desired inequality is immediate, since the right side is nonnegative. Thus, we assume \(k\ge 1\). Since \(p_0^k\mid Q\), we have
\[
\ord_{p_0^k}(m)\mid \ord_Q(m)=\tau_Q.
\]
Therefore---by \cref{lem:odd-prime-powers} when \(p_0\) is odd and by \cref{lem:two-adic-overhead} when \(p_0=2\)---we have
\[
\vpp{p_0}(\tau_Q)\ge \vpp{p_0}\bigl(\ord_{p_0^k}(m)\bigr)\ge k-t.
\]

Finally, we note that
\[
k-t
\le
\vpp{p_0}(\tau_Q)
=
\vpp{p_0}(L)+\vpp{p_0}(\tau_*)
\le
\log_{p_0}(c_{m,D})+\vpp{p_0}(\tau_*),
\]
and \(\tau_*=\ord_{\rad(Q)}(m)\), so the claim follows.
\end{proof}

\begin{remark}[Comparison with~\cite{LWY}]
Specializing \cref{prop:denominator-obstruction} to
\[
Q=M_n=\frac{n!}{(n!,m^\infty)}
\]
corresponds to the fixed-prime inequality that underlies the factorial argument of~\cite{LWY}. The new point is that \cref{prop:denominator-obstruction} is stated for an arbitrary \(m\)-coprime denominator \(Q\), with no factorial structure assumed; all sequence-dependent input is deferred to \cref{thm:fixed-prime-criterion-structural}.
\end{remark}

\subsection{Main result}\label{sec:main-result}

Combining the denominator obstruction (\cref{prop:denominator-obstruction}) with our reduction lemma (\cref{lem:reduction}) yields our criterion.

\begin{theorem}[Fixed-prime criterion: structural form]\label{thm:fixed-prime-criterion-structural}
Let \(m\ge 3\) and let \(K_{m,D}\) be a missing-digit set as defined in \eqref{eq:kmd-definition}.
For \(\{a_n\}_{n\ge 1}\subset\N\), define
\[
Q_n\defeq \frac{a_n}{(a_n,m^\infty)}.
\]
Fix a prime \(p_0\nmid m\), and let \(t=t(m,p_0)\) be as in \eqref{eq:overhead-parameter}.

If there exist explicit functions \(\alpha,\gamma:\N\to\R\) and an index \(N\in\N\) such that for every \(n\ge N\),
\[
\vpp{p_0}(Q_n)\ge \alpha(n),
\qquad
\vpp{p_0}\bigl(\ord_{\rad(Q_n)}(m)\bigr)\le \gamma(n),
\]
and moreover there exists \(n_0\ge N\) such that for all \(n\ge n_0\),
\begin{equation}
\alpha(n)>t+\gamma(n)+\log_{p_0}(c_{m,D}),
\label{eq:structural-cutoff}
\end{equation}
then we have
\[
\frac{1}{a_n}\notin K_{m,D}
\qquad
(n\ge n_0).
\]
In particular, the intersection
\[
\left\{\frac{1}{a_n}:n\in\N\right\}\cap K_{m,D}
\]
is finite.
\end{theorem}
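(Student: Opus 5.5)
The plan is to argue by contradiction for a fixed \(n\ge n_0\), combining \cref{lem:reduction} with \cref{prop:denominator-obstruction}. Suppose \(\frac{1}{a_n}\in K_{m,D}\) for some \(n\ge n_0\).

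The first step is to check that \(Q_n\ge 2\), so that \cref{lem:reduction} applies. Since \(n\ge n_0\ge N\), the hypotheses give
\[
\vpp{p_0}(Q_n)\ge\alpha(n)>t+\gamma(n)+\log_{p_0}(c_{m,D}).
\]
I need the right-hand side to be nonnegative. I have \(t\ge 0\), and \(\log_{p_0}(c_{m,D})>0\) since \(c_{m,D}\ge 4\). For \(\gamma(n)\), the hypothesis \(\gamma(n)\ge \vpp{p_0}(\ord_{\rad(Q_n)}(m))\ge 0\) holds with the convention \(\ord_1(m)=1\). Hence \(\vpp{p_0}(Q_n)>0\), so \(p_0\mid Q_n\) and in particular \(Q_n\ge 2\).

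Next, \cref{lem:reduction} with \(A=a_n\) produces an integer \(r\) with \(0<r<Q_n\), \(\gcd(mr,Q_n)=1\), and \(\frac{r}{Q_n}\in K_{m,D}\). Applying \cref{prop:denominator-obstruction} with \(Q=Q_n\) then gives
\[
\vpp{p_0}(Q_n)\le t+\vpp{p_0}\bigl(\ord_{\rad(Q_n)}(m)\bigr)+\log_{p_0}(c_{m,D})\le t+\gamma(n)+\log_{p_0}(c_{m,D}).
\]
This contradicts the strict lower bound obtained in the first step. Therefore \(\frac{1}{a_n}\notin K_{m,D}\) for every \(n\ge n_0\).

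For finiteness, every element of the intersection has the form \(\frac{1}{a_n}\) with \(n<n_0\), so the intersection has at most \(n_0-1\) elements.

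There is no real obstacle here. The only point needing care is the nondegeneracy condition \(Q_n\ge 2\) required by \cref{lem:reduction}, and it follows from the cutoff inequality \eqref{eq:structural-cutoff} together with the nonnegativity of the overhead terms.
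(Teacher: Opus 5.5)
Your proposal is correct and matches the paper's proof step for step: you use the cutoff to get \(\vpp{p_0}(Q_n)\ge 1\), hence \(Q_n\ge 2\); then you apply \cref{lem:reduction} and \cref{prop:denominator-obstruction} to reach a contradiction. You also say explicitly that \(\gamma(n)\ge 0\), which follows from the hypothesis. The paper uses this fact without stating it when it writes that the cutoff implies \(\alpha(n)>t\).
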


\begin{proof}
We fix \(n\ge n_0\), and suppose for the sake of seeking a contradiction that \(\frac{1}{a_n}\in K_{m,D}\).
Since \eqref{eq:structural-cutoff} implies that \(\alpha(n)>t\), we have \(\vpp{p_0}(Q_n)\ge 1\); hence \(Q_n\ge 2\).
By \cref{lem:reduction}, there exists \(r_n\) such that
\[
0<r_n<Q_n,
\qquad
\gcd(mr_n,Q_n)=1,
\qquad
\frac{r_n}{Q_n}\in K_{m,D}.
\]
Applying \cref{prop:denominator-obstruction} with \(Q=Q_n\), we obtain
\[
\vpp{p_0}(Q_n)
\le
t+\vpp{p_0}\bigl(\ord_{\rad(Q_n)}(m)\bigr)+\log_{p_0}(c_{m,D}).
\]
Using our hypotheses then gives
\[
\alpha(n)\le t+\gamma(n)+\log_{p_0}(c_{m,D}),
\]
contradicting \eqref{eq:structural-cutoff}.
\end{proof}

Writing \(\Pplus(n)\) for the largest prime divisor of \(n\), with the convention that \(\Pplus(1)=1\), we obtain the following corollary.

\begin{corollary}[Fixed-prime criterion: largest-prime-factor form]\label{cor:fixed-prime-criterion-largest-prime}
Let \(m\ge 3\), let \(K_{m,D}\) be a missing-digit set, and take \(Q_n=\frac{a_n}{(a_n,m^\infty)}\) for a sequence \(\{a_n\}\subset\N\) as in \cref{thm:fixed-prime-criterion-structural}.
Fix a prime \(p_0\nmid m\), and let \(t=t(m,p_0)\) be as in \eqref{eq:overhead-parameter}.

If there exist explicit functions \(\alpha,\beta:\N\to\R\) and an index \(N\in\N\) such that for every \(n\ge N\),
\[
\vpp{p_0}(Q_n)\ge \alpha(n),
\qquad
\Pplus(Q_n)\le \beta(n),
\qquad
\beta(n)\ge 2,
\]
and moreover there exists \(n_0\ge N\) such that for all \(n\ge n_0\),
\begin{equation}
\alpha(n)
>
t+\log_{p_0}(\beta(n)-1)+\log_{p_0}(c_{m,D}),
\label{eq:largest-prime-cutoff}
\end{equation}
then we have
\[
\frac{1}{a_n}\notin K_{m,D}
\qquad
(n\ge n_0).
\]
In particular, the intersection
\[
\left\{\frac{1}{a_n}:n\in\N\right\}\cap K_{m,D}
\]
is finite.
\end{corollary}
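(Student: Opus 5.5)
The plan is to deduce the corollary from \cref{thm:fixed-prime-criterion-structural} by taking \(\gamma(n)\defeq \log_{p_0}(\beta(n)-1)\), which is well defined and nonnegative because \(\beta(n)\ge 2\). With this choice, \eqref{eq:largest-prime-cutoff} is literally \eqref{eq:structural-cutoff}, and the lower bound \(\vpp{p_0}(Q_n)\ge\alpha(n)\) is assumed. So the only thing to prove is that for every \(n\ge N\),
\[
\vpp{p_0}\bigl(\ord_{\rad(Q_n)}(m)\bigr)\le \log_{p_0}\bigl(\Pplus(Q_n)-1\bigr)\le \log_{p_0}(\beta(n)-1),
\]
at least whenever \(Q_n\ge 2\). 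The case \(Q_n=1\) needs a word, and I handle it below.

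For the key inequality I use the equivalent form in \cref{prop:denominator-obstruction}: \(\vpp{p_0}(\ord_{\rad(Q)}(m))=\max_{p\mid Q}\vpp{p_0}(\ord_p(m))\). This holds because \(\ord_{\rad(Q)}(m)=\lcm_{p\mid Q}\ord_p(m)\) by the Chinese remainder theorem, and the valuation of an lcm is the maximum of the valuations. For each prime \(p\mid Q_n\), we have \(\ord_p(m)\mid p-1\). If \(\vpp{p_0}(\ord_p(m))=e\ge 1\), then \(p_0^e\mid p-1\), so \(p_0^e\le p-1\le \Pplus(Q_n)-1\), giving \(e\le\log_{p_0}(\Pplus(Q_n)-1)\). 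If \(e=0\), the bound holds trivially since \(\Pplus(Q_n)\ge 2\) makes the right side nonnegative. Taking the maximum over \(p\) and using \(\Pplus(Q_n)\le\beta(n)\) together with the monotonicity of \(\log_{p_0}\) yields the displayed inequality.

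The only subtlety is the degenerate case \(Q_n=1\). Here \(\rad(Q_n)=1\), \(\ord_1(m)=1\), and the left side is \(0\le\log_{p_0}(\beta(n)-1)\). So the hypothesis of \cref{thm:fixed-prime-criterion-structural} holds for every \(n\ge N\) in all cases. In any event that theorem's proof itself shows \(Q_n\ge 2\) for \(n\ge n_0\). Applying \cref{thm:fixed-prime-criterion-structural} with \(\alpha\), \(\gamma\), \(N\), \(n_0\) then gives \(\frac{1}{a_n}\notin K_{m,D}\) for \(n\ge n_0\), and finiteness follows. I expect no real obstacle; the one point requiring care is the valuation-of-lcm identity, together with checking that the \(-1\) in \(\beta(n)-1\) comes from \(\ord_p(m)\mid p-1\) rather than from \(p\) itself.
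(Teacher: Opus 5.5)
Your proposal is correct and matches the paper's proof essentially step for step. You set \(\gamma(n)=\log_{p_0}(\beta(n)-1)\), bound \(\vpp{p_0}\bigl(\ord_{\rad(Q_n)}(m)\bigr)\) by \(\max_{p\mid Q_n}\vpp{p_0}(\ord_p(m))\) using \(\ord_p(m)\mid p-1\), treat \(Q_n=1\) separately via \(\ord_1(m)=1\), and invoke \cref{thm:fixed-prime-criterion-structural}. Your Chinese-remainder justification of the lcm/max identity is a detail the paper leaves implicit, as is the harmless point that \(\gamma(n)\) should be defined arbitrarily for \(n<N\), where \(\beta(n)\ge 2\) is not assumed.
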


\begin{proof}
For each \(n\), if \(Q_n=1\), then
\[
\vpp{p_0}\bigl(\ord_{\rad(Q_n)}(m)\bigr)
=
\vpp{p_0}\bigl(\ord_1(m)\bigr)
=
0
\le
\log_{p_0}(\beta(n)-1).
\]
If \(Q_n\ge 2\), then
\[
\vpp{p_0}\bigl(\ord_{\rad(Q_n)}(m)\bigr)
=
\max_{p\mid Q_n}\left\{\vpp{p_0}\bigl(\ord_p(m)\bigr)\right\}.
\]
For each prime \(p\mid Q_n\), we have \(\ord_p(m)\mid (p-1)\), hence
\[
\vpp{p_0}\bigl(\ord_p(m)\bigr)\le \vpp{p_0}(p-1)\le \log_{p_0}(p-1)\le \log_{p_0}(\beta(n)-1),
\]
because \(p\le \Pplus(Q_n)\le \beta(n)\). Thus
\[
\vpp{p_0}\bigl(\ord_{\rad(Q_n)}(m)\bigr)\le \log_{p_0}(\beta(n)-1),
\]
and the result follows from \cref{thm:fixed-prime-criterion-structural}.
\end{proof}

\begin{remark}
If
\[
G(n)\defeq \alpha(n)-\gamma(n)
\]
is eventually nondecreasing, then any verified tail on which
\[
G(n)>t+\log_{p_0}(c_{m,D})
\]
holds yields an explicit cutoff in \cref{thm:fixed-prime-criterion-structural}.

In the setting of \cref{cor:fixed-prime-criterion-largest-prime}, this specializes to
\[
F(n)\defeq \alpha(n)-\log_{p_0}(\beta(n)-1).
\]
When \(\alpha(n)\) is integer-valued, it may be convenient to replace \eqref{eq:largest-prime-cutoff} by the equivalent inequality
\begin{equation}
p_0^{\alpha(n)-t}>c_{m,D}\,(\beta(n)-1).
\label{eq:largest-prime-cutoff-exponential}
\end{equation}
\end{remark}

\section{Applications}\label{sec:applications}

We now turn to applications. The first four families lie in the regime where \cref{cor:fixed-prime-criterion-largest-prime} is easy to deploy; they may be viewed as progressively broader variants of the original Lin--Wu--Yang~\cite{LWY} setting. We then conclude with a more structural example, where \cref{thm:fixed-prime-criterion-structural} applies even though the coarse largest-prime-factor packaging need not.

Before treating those families, we record the exact remainder recurrence used in our finite verification steps, which is simply the base-\(m\) long-division algorithm written in a form convenient for reproducible membership tests.\footnote{In the few cases where \(\frac{1}{a_n}=1\), we handle membership separately via the nonterminating identity \(1=0.\overline{(m-1)}\) in base \(m\) (so, in particular, \(1=0.\overline{2}\in C\) in base \(3\)).}

\begin{lemma}[Exact base-\(m\) membership test away from the terminating regime]\label{lem:exact-base-m-test}
Let \(x=\frac{u}{v}\in(0,1)\) be in lowest terms, and assume that the denominator \(v\) does not divide any power of \(m\).
Define \(r_0=u\), and for \(j\ge 1\) write
\[
mr_{j-1}=d_jv+r_j,
\qquad
d_j\in\{0,1,\dots,m-1\},
\quad
0\le r_j<v.
\]
Then the unique base-\(m\) expansion of \(x\) is
\[
x=\sum_{j=1}^\infty \frac{d_j}{m^j},
\]
and the remainder sequence \((r_j)\) is eventually periodic. If \(k\) denotes the least positive integer for which \(r_k=r_i\) for some \(0\le i<k\), then
\[
x\in K_{m,D}
\iff
d_j\in D \text{ for every } 1\le j\le k.
\]
\end{lemma}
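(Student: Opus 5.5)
The plan has three parts: establish the long-division identities, use uniqueness of the expansion (where non-termination matters), and extract periodicity of the digits from periodicity of the remainders.

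\emph{Long-division identities.} By induction on \(j\), every remainder satisfies \(0\le r_j<v\). Indeed, \(0\le r_{j-1}<v\) gives \(0\le m r_{j-1}<mv\). Hence \(d_j=\lfloor m r_{j-1}/v\rfloor\) lies in \(\{0,\dots,m-1\}\), and \(r_j\) is determined by \(r_{j-1}\). Iterating the recurrence gives, for every \(J\), the identity
\[
x=\sum_{j=1}^{J}\frac{d_j}{m^j}+\frac{r_J}{m^J v}.
\]
This is a direct telescoping from \(r_{j-1}/v=d_j/m+r_j/(mv)\). Since \(0\le r_J/(m^Jv)<m^{-J}\), letting \(J\to\infty\) yields \(x=\sum_j d_j m^{-j}\).

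\emph{Uniqueness.} The only reals with two base-\(m\) expansions are those of the form \(a/m^J\). In lowest terms such a number has denominator dividing \(m^J\), which the hypothesis on \(v\) excludes. So the expansion just constructed is the unique one. I will also check that it is nonterminating and does not end in \((m-1)\)'s. Indeed, \(r_J=0\) would give \(x=a/m^J\), so all \(r_J>0\). Moreover, the identity shows the tail \(r_J/(m^Jv)\) is strictly less than \(m^{-J}\), which excludes a tail of all \(m-1\)'s.

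\emph{Membership and periodicity.} Since \(x\) has a unique expansion, \(x\in K_{m,D}\) if and only if \(d_j\in D\) for all \(j\ge1\); this is where the definition of \(K_{m,D}\) as a set of sums is compared against uniqueness. Because each \(r_j\) lies in the finite set \(\{0,\dots,v-1\}\), pigeonhole guarantees a first repetition index \(k\) with \(r_k=r_i\) for some \(i<k\). The map \(r\mapsto(\text{digit},\text{next remainder})\) is deterministic, so \(r_{j+(k-i)}=r_j\) and \(d_{j+1+(k-i)}=d_{j+1}\) for all \(j\ge i\). Therefore every digit \(d_j\) with \(j>k\) equals some \(d_{j'}\) with \(i<j'\le k\). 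It follows that the set of digits appearing in the whole expansion is exactly \(\{d_1,\dots,d_k\}\), which gives the stated equivalence.

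The main subtlety is the uniqueness step. I will argue it carefully, since membership in \(K_{m,D}\) only requires \emph{some} expansion with digits in \(D\). The hypothesis that \(v\) divides no power of \(m\) is exactly what rules out the second expansion; the rest is routine.
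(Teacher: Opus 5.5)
Your proposal is correct and follows essentially the same route as the paper: the telescoped long-division identity \(x=\sum_{j\le N}d_j m^{-j}+r_N/(vm^N)\), uniqueness of the expansion because the hypothesis on \(v\) rules out the terminating case, and determinism of the recursion after the first repeated remainder, so every later digit repeats one of \(d_1,\dots,d_k\). Your extra check that the constructed expansion neither terminates nor ends in \((m-1)\)'s is harmless but redundant once uniqueness is invoked.
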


\begin{proof}
The recursion is exactly the base-\(m\) long-division algorithm: at step \(j\), the quotient digit is
\[
d_j=\left\lfloor \frac{mr_{j-1}}{v}\right\rfloor,
\]
and the new remainder is \(r_j=mr_{j-1}-d_jv\), so \(0\le r_j<v\). Iterating gives
\[
\frac{u}{v}
=
\sum_{j=1}^N \frac{d_j}{m^j}+\frac{r_N}{vm^N}
\qquad (N\ge 1),
\]
and letting \(N\to\infty\) yields
\[
x=\sum_{j=1}^\infty \frac{d_j}{m^j}.
\]

Since there are only finitely many possible remainders, there exists a least positive integer \(k\) such that \(r_k=r_i\) for some \(0\le i<k\). Hence \((r_j)\) is eventually periodic, and therefore so is \((d_j)\). A rational number has two base-\(m\) expansions only in the terminating case, i.e. when its denominator divides a power of \(m\); by hypothesis this does not occur, so the expansion here is unique.

Finally, once a remainder repeats, the long-division recursion becomes deterministic: if \(r_k=r_i\), then
\[
d_{k+\ell}=d_{i+\ell},
\qquad
r_{k+\ell}=r_{i+\ell}
\qquad (\ell\ge 1).
\]
Thus every digit after \(d_k\) is a repeat of an earlier digit among \(d_1,\dots,d_k\). Therefore
\[
x\in K_{m,D}
\iff
d_j\in D \text{ for every } 1\le j\le k.
\qedhere
\]
\end{proof}

\begin{remark}[Certification protocol for the finite checks]\label{rem:finite-checks}
Every exact intersection statement in this section is proved in two steps. First, a cutoff \(N_0\) is obtained from \cref{thm:fixed-prime-criterion-structural} or \cref{cor:fixed-prime-criterion-largest-prime}, so that \(\frac{1}{a_n}\notin K_{m,D}\) for all \(n\ge N_0\). Second, for each remaining \(1\le n<N_0\), we reduce \(\frac{1}{a_n}\) to lowest terms \(u_n/v_n\). If \(v_n=1\), membership is handled separately. If \(v_n>1\) but divides a power of \(m\), then \(\frac{u_n}{v_n}\) lies in the terminating regime and can again be handled directly. Otherwise \(v_n\) does not divide any power of \(m\), and \cref{lem:exact-base-m-test} decides membership exactly by checking the digits \(d_1,\dots,d_k\) up to the first repeated remainder \(r_k\). The tables in the sequel record these exact checks; throughout, the computation uses only exact arithmetic (arbitrary-precision integers, no floating point).\footnote{See \cref{app:computational-verification} for the implementation source code.}
\end{remark}

\subsection{Factorials}

Our criterion builds directly on the approach of Lin, Wu, and Yang~\cite{LWY}, and as such, we can directly recover their finiteness result for reciprocals of factorials in missing-digit sets.

\begin{proposition}[{Reciprocals of factorials~\cite[Theorem 1.4]{LWY}}]\label{prop:factorials}
Fix \(m\ge 3\) and a missing-digit set \(K_{m,D}\).
Then, the intersection
\[
\left\{\frac{1}{n!}:n\in\N\right\}\cap K_{m,D}
\]
is finite.

More precisely, for every prime \(p_0\nmid m\), \cref{cor:fixed-prime-criterion-largest-prime} applies for \(a_n=n!\), with
\[
\alpha(n)=\frac{n-s_{p_0}(n)}{p_0-1},
\qquad
\beta(n)=n,
\]
where \(s_{p_0}(n)\) denotes the sum of the base-\(p_0\) digits of \(n\).
\end{proposition}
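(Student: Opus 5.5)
The plan is to apply \cref{cor:fixed-prime-criterion-largest-prime} with \(a_n=n!\), so \(Q_n=M_n\), and any fixed prime \(p_0\nmid m\). Such a prime exists because \(m\) has only finitely many prime divisors. We need to verify three things: the lower bound \(\alpha\), the upper bound \(\beta\), and the eventual cutoff \eqref{eq:largest-prime-cutoff}.

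\emph{Lower bound.} Since \(p_0\nmid m\), removing the \(m\)-part of \(n!\) does not change its \(p_0\)-adic valuation. Hence \(\vpp{p_0}(Q_n)=\vpp{p_0}(n!)\). By Legendre's formula,
\[
\vpp{p_0}(n!)=\frac{n-s_{p_0}(n)}{p_0-1}=\alpha(n).
\]

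\emph{Upper bound.} Every prime dividing \(Q_n\) divides \(n!\), so it is at most \(n\). Therefore \(\Pplus(Q_n)\le n=\beta(n)\), and \(\beta(n)\ge 2\) holds for \(n\ge N\defeq 2\).

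\emph{Cutoff.} It remains to show that for all sufficiently large \(n\),
\[
\frac{n-s_{p_0}(n)}{p_0-1}>t+\log_{p_0}(n-1)+\log_{p_0}(c_{m,D}).
\]
The digit sum is bounded by the number of base-\(p_0\) digits times the largest digit:
\[
s_{p_0}(n)\le (p_0-1)\bigl(\lfloor\log_{p_0} n\rfloor+1\bigr).
\]
Consequently
\[
\alpha(n)\ge \frac{n}{p_0-1}-\log_{p_0}n-1.
\]
The left side of the cutoff therefore grows linearly in \(n\). The right side is \(O(\log n)\), because \(t=t(m,p_0)\) and \(c_{m,D}\) do not depend on \(n\). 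So the inequality holds for all \(n\ge n_0\) with some explicit \(n_0\). To make \(n_0\) explicit, one can note that
\[
n\mapsto \frac{n}{p_0-1}-2\log_{p_0}n
\]
is eventually increasing, which is exactly the monotone-\(F\) remark above.

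\cref{cor:fixed-prime-criterion-largest-prime} then gives \(\frac{1}{n!}\notin K_{m,D}\) for \(n\ge n_0\), and finiteness follows. There is no real obstacle. The only point needing care is the bound on \(s_{p_0}(n)\), which ensures that the linear term dominates, and this is elementary.
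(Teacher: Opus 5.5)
Your proposal is correct and follows the paper's proof: Legendre's formula gives \(\vpp{p_0}(Q_n)=\vpp{p_0}(n!)\), which is unchanged by removing the \(m\)-part; every prime factor of \(Q_n\) is at most \(n\), so \(\Pplus(Q_n)\le n\); and the digit-sum bound shows that \(\alpha(n)\) grows linearly while the right side of \eqref{eq:largest-prime-cutoff} is \(O(\log n)\). Your extra checks (that some \(p_0\nmid m\) exists, and that \(\beta(n)\ge 2\) for \(n\ge 2\)) are details the paper leaves implicit, and the increasing function you use for an explicit cutoff is a lower bound for \(F(n)\) rather than \(F(n)\) itself, which is harmless.
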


\begin{proof}
As \(p_0\nmid m\), removing the \(m\)-part of \(a_n=n!\) does not change the \(p_0\)-adic valuation:
\[
\vpp{p_0}(Q_n)=\vpp{p_0}\left(\frac{n!}{(n!,m^\infty)}\right)=\vpp{p_0}(n!).
\]
Also every prime divisor of \(Q_n\) is at most \(n\), so \(\Pplus(Q_n)\le n\).

By Legendre's formula \cite{legendre1830theorie},
\[
\vpp{p_0}(n!)=\frac{n-s_{p_0}(n)}{p_0-1}.
\]
Then, using
\[
s_{p_0}(n)\le (p_0-1)\bigl(\lfloor \log_{p_0}n\rfloor+1\bigr),
\]
we obtain
\[
\alpha(n)\ge \frac{n}{p_0-1}-\log_{p_0}n-1.
\]
Thus, \(\alpha(n)\) grows linearly while the right side of \eqref{eq:largest-prime-cutoff} is \(O(\log n)\) in this instance, so \cref{cor:fixed-prime-criterion-largest-prime} applies with \(\beta(n)=n\).
\end{proof}

\begin{remark}\label{rem:factorial-explicit}
For \(m=3\) and either \(D=\{0,1\}\) or \(D=\{0,2\}\), we have \(c_{3,D}=4\); if we choose \(p_0=2\), then \(t=t_2(3)=2\).
Using the exact identity \(\vpp{2}(n!)=n-s_2(n)\), we can verify that the cutoff inequality \eqref{eq:largest-prime-cutoff} already holds for all \(n\ge 10\), which is slightly sharper than the \(n\ge 21\) cutoff obtained by Lin, Wu, and Yang~\cite{LWY}. In the middle-third Cantor case \(D=\{0,2\}\), applying \cref{lem:exact-base-m-test} to each reciprocal \(\frac{1}{n!}\) for \(1\le n\le 9\) then recovers \eqref{eq:lwy-intersection}; see \cref{tab:factorials}.
\end{remark}

\begin{table}[tp]
\centering

\renewcommand{\arraystretch}{1.15}
\begin{tabular}{rlr}
\toprule
$n$ & base-$3$ prefix of $\tfrac{1}{a_n}$ & first $1$ pos.\\
\midrule
$1$ & $0.\overline{2}$ & --- \\
$2$ & $0.\mathbf{1}11\ldots$ & $1$ \\
$3$ & $0.0\mathbf{1}11\ldots$ & $2$ \\
$4$ & $0.00\mathbf{1}01\ldots$ & $3$ \\
$5$ & $0.0\overline{0002}$ & --- \\
$6$ & $0.00000\mathbf{1}00\ldots$ & $6$ \\
$7$ & $0.0000000\mathbf{1}02\ldots$ & $8$ \\
$8$ & $0.000000000\mathbf{1}11\ldots$ & $10$ \\
$9$ & $0.00000000000\mathbf{1}11\ldots$ & $12$ \\
\bottomrule
\end{tabular}

\vspace*{12pt}
\caption{Ternary verification for reciprocal factorials \(a_n=n!\), \(1\le n\le 9\).
The column ``first~\(1\) pos.''\ gives the position of the first ternary digit equal to \(1\) in the base-\(3\) expansion of \(\frac{1}{a_n}\); a dash indicates that \(\frac{1}{a_n}\in C\).}
\label{tab:factorials}
\end{table}

\subsection{Superfactorials}

Summing the factorial valuation estimate over \(1\le k\le n\) immediately yields a quadratic-growth variant for superfactorials.

\begin{proposition}[Reciprocals of superfactorials]\label{prop:superfactorials}
Fix \(m\ge 3\) and a missing-digit set \(K_{m,D}\), and let
\[
a_n=\prod_{k=1}^n k!.
\]
Then, the intersection \(\left\{\frac{1}{a_n}:n\in\N\right\}\cap K_{m,D}\) is finite.
\end{proposition}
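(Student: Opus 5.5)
We apply \cref{cor:fixed-prime-criterion-largest-prime} with an arbitrary fixed prime \(p_0\nmid m\). Such a prime exists because only finitely many primes divide \(m\). The argument follows the factorial case, with the valuation estimate summed over \(k\le n\).

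The first step is to compute the \(p_0\)-adic valuation. Since \(p_0\nmid m\), removing the \(m\)-part does not affect it, so
\[
\vpp{p_0}(Q_n)=\vpp{p_0}(a_n)=\sum_{k=1}^n \vpp{p_0}(k!)=\sum_{k=1}^n \frac{k-s_{p_0}(k)}{p_0-1}.
\]
Using \(s_{p_0}(k)\le (p_0-1)(\lfloor\log_{p_0}k\rfloor+1)\le (p_0-1)(\log_{p_0}n+1)\) for \(k\le n\), we get
\[
\vpp{p_0}(Q_n)\ge \frac{n(n+1)}{2(p_0-1)}-n(\log_{p_0}n+1)\ldefeq\alpha(n).
\]
This lower bound is quadratic in \(n\) up to an \(O(n\log n)\) error.

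The second step is the largest-prime-factor bound. Every prime dividing \(a_n\) divides some \(k!\) with \(k\le n\), so it is at most \(n\). Hence \(\Pplus(Q_n)\le n\), and we take \(\beta(n)=n\), which satisfies \(\beta(n)\ge 2\) for \(n\ge N=2\).

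The third step is to verify the cutoff \eqref{eq:largest-prime-cutoff}. Its right-hand side is
\[
t+\log_{p_0}(n-1)+\log_{p_0}(c_{m,D}),
\]
which is \(O(\log n)\), while \(\alpha(n)\) grows like \(n^2/(2(p_0-1))\). So there is an \(n_0\) beyond which \eqref{eq:largest-prime-cutoff} holds. \cref{cor:fixed-prime-criterion-largest-prime} then gives \(\frac{1}{a_n}\notin K_{m,D}\) for \(n\ge n_0\), and the intersection is finite.

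Nothing here is a genuine obstacle. The only care needed is in stating \(\alpha\) so that it is an honest lower bound for every \(n\ge N\), which the uniform digit-sum bound above provides. If an explicit \(n_0\) is wanted, one can compare \(n^2/(2(p_0-1))\) against \(n\log_{p_0}n\) plus constants directly.
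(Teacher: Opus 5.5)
Your proof is correct and follows essentially the same route as the paper: you apply the largest-prime-factor corollary with an arbitrary prime \(p_0\nmid m\), and you sum Legendre's formula with the digit-sum bound to get the same lower bound \(\frac{n(n+1)}{2(p_0-1)}-n\log_{p_0}n-n\). You then take \(\beta(n)=n\), so the cutoff's right side is \(O(\log n)\). Your choice \(N=2\), which keeps \(\log_{p_0}(\beta(n)-1)\) defined, is a small point of care that the paper leaves implicit.
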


\begin{proof}
We fix a prime \(p_0\nmid m\).
As in the case of factorials (\cref{prop:factorials}), removing the \(m\)-part does not change the \(p_0\)-adic valuation of \(a_n\), so we obtain
\[
\vpp{p_0}(Q_n)=\sum_{k=1}^n \vpp{p_0}(k!);
\]
and again every prime divisor of \(Q_n\) is at most \(n\); hence, \(\Pplus(Q_n)\le n\).

By Legendre's formula and the bound \(s_{p_0}(k)\le (p_0-1)(\log_{p_0}k+1)\), we see that
\begin{equation}
\vpp{p_0}(k!)
=
\frac{k-s_{p_0}(k)}{p_0-1}
\ge
\frac{k}{p_0-1}-\log_{p_0}k-1.
\label{eq:superfactorial-factorial-bound}
\end{equation}
Summing \eqref{eq:superfactorial-factorial-bound} over \(1\le k\le n\), we obtain
\[
\vpp{p_0}(Q_n)
\ge
\frac{n(n+1)}{2(p_0-1)}-n\log_{p_0}n-n.
\]
Thus taking \(\alpha(n)=\vpp{p_0}(Q_n)\), we see that the left side of \eqref{eq:largest-prime-cutoff} grows quadratically, while the right side of \eqref{eq:largest-prime-cutoff} in this instance is again \(O(\log n)\) when \(\beta(n)=n\); thus, \cref{cor:fixed-prime-criterion-largest-prime} applies.
\end{proof}

As an illustration, we derive the explicit bound for the standard Cantor set.

\begin{corollary}[Reciprocals of superfactorials in the standard Cantor set]
Let \(C=K_{3,\{0,2\}}\), and define
\[
a_n=\prod_{k=1}^n k!.
\]
Then, we have
\[
\left\{\frac{1}{a_n}:n\in\N\right\}\cap C
=
\left\{1,\frac{1}{12}\right\};
\]
equivalently,
\[
\frac{1}{a_n}\in C
\iff
n\in\{1,3\}.
\]
\end{corollary}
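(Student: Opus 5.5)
The plan is to apply \cref{cor:fixed-prime-criterion-largest-prime} with \(m=3\), \(D=\{0,2\}\), \(p_0=2\), which gives an explicit cutoff \(n_0\), and then to settle the finitely many \(n<n_0\) by the exact test of \cref{lem:exact-base-m-test}, following \cref{rem:finite-checks}.

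\emph{Step 1: parameters.} By \cref{rem:factorial-explicit}, \(c_{3,\{0,2\}}=4\) and \(t=t_2(3)=\vpp{2}(8)-1=2\). Since \(2\nmid 3\), removing the \(3\)-part does not change the \(2\)-adic valuation. So I take
\[
\alpha(n)\defeq \vpp{2}(Q_n)=\sum_{k=1}^n \vpp{2}(k!)=\sum_{k=1}^n \bigl(k-s_2(k)\bigr),
\]
which is an exact identity, not just a lower bound. Every prime factor of \(Q_n\) is at most \(n\), so I take \(\beta(n)=n\) for \(n\ge 2\).

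\emph{Step 2: the cutoff.} Because \(\alpha\) is integer-valued, I use the form \eqref{eq:largest-prime-cutoff-exponential}, which here reads \(2^{\alpha(n)-2}>4(n-1)\). The successive values of \(\vpp{2}(k!)\) are \(0,1,1,3,3,4,4,7,\dots\), so \(\alpha(n)\) for \(n=1,\dots,6\) is \(0,1,2,5,8,12\).
\begin{itemize}
\item At \(n=4\): \(2^{3}=8\not>12\), so the inequality fails.
\item At \(n=5\): \(2^{6}=64>16\), so it holds.
\end{itemize}
For monotonicity, \(\alpha(n+1)-\alpha(n)=\vpp{2}((n+1)!)\ge 1\) for \(n\ge 1\). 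Hence the left side at least doubles at each step, while the right side grows by only \(4\). The inequality therefore persists for all \(n\ge 5\), and the corollary gives \(\frac{1}{a_n}\notin C\) for \(n\ge 5\).

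\emph{Step 3: finite check for \(n\le 4\).} Here \(a_n=1,2,12,288\).
\begin{itemize}
\item \(n=1\): \(1=0.\overline{2}\in C\).
\item \(n=2\): \(\frac12=0.\overline{1}\notin C\).
\item \(n=3\): \(\frac1{12}=\frac13\cdot\frac14\). Since \(\frac14=0.\overline{02}\), we get \(\frac1{12}=0.0\overline{02}\in C\).
\item \(n=4\): \(\frac1{288}=\frac19\cdot\frac1{32}\), so its ternary digits are two zeros followed by those of \(\frac1{32}\). Since \(\ord_{32}(3)=8\), we have \(\frac1{32}=\frac{205}{3^8-1}\). Writing \(205=2\cdot 81+1\cdot 27+\dots\), the first period of \(\frac1{32}\) is \(0.00021121\ldots\), which contains the digit \(1\). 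Hence \(\frac1{288}\notin C\).
\end{itemize}
These cases would be certified mechanically by \cref{lem:exact-base-m-test}.

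The only step needing care is Step 2: pinning down the exact cutoff. The coarse Legendre bound from \cref{prop:superfactorials} would give a much larger \(n_0\), so I use the exact valuations, together with the monotonicity argument to cover the whole tail. Everything else is routine.
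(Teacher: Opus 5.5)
Your proposal is correct and follows the paper's proof essentially step for step: the largest-prime-factor corollary with \(p_0=2\), \(t=2\), \(c_{3,D}=4\), the exact \(\alpha(n)=\sum_{k\le n}(k-s_2(k))\) and \(\beta(n)=n\), the cutoff checked at \(n=5\) and carried to all larger \(n\) by a monotonicity argument, and a direct check of \(n\le 4\). The differences are cosmetic: you carry the cutoff forward in the exponential form \(2^{\alpha(n)-2}>4(n-1)\) (the left side doubles while the right side gains only \(4\)) rather than via \(\log_2\bigl(n/(n-1)\bigr)<1\), and you compute the ternary expansion of \(1/288\) by hand (correctly, \(0.00\overline{00021121}\)) instead of citing the table.
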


\begin{proof}
Again \(m=3\), \(D=\{0,2\}\), \(c_{3,D}=4\), and for \(p_0=2\) we have \(t=2\).
As in the proof of \cref{prop:superfactorials}, \cref{cor:fixed-prime-criterion-largest-prime} applies to \(a_n=\prod_{k=1}^n k!\) with
\[
\alpha(n)=\sum_{k=1}^n \vpp{2}(k!)=\sum_{k=1}^n (k-s_2(k)),
\qquad
\beta(n)=n.
\]

A direct evaluation gives
\[
\alpha(5)=0+1+1+3+3=8,
\]
while we have
\[
t+\log_2(\beta(5)-1)+\log_2(c_{3,D})
=
2+\log_2(4)+\log_2(4)
=
6.
\]
Thus \eqref{eq:largest-prime-cutoff} holds at \(n=5\). Moreover,
\[
\alpha(n+1)-\alpha(n)=\vpp{2}((n+1)!)\ge 1
\qquad (n\ge 1),
\]
whereas
\begin{align*}
&\bigl(2+\log_2(\beta(n+1)-1)+\log_2(4)\bigr)-\bigl(2+\log_2(\beta(n)-1)+\log_2(4)\bigr)\\
&=
\bigl(2+\log_2 n+\log_2(4)\bigr)-\bigl(2+\log_2(n-1)+\log_2(4)\bigr)\\
&=
\log_2\left(\frac{n}{n-1}\right)\\
&<1
\qquad (n\ge 3).
\end{align*}
Hence, once \eqref{eq:largest-prime-cutoff} holds at \(n=5\), it holds for every \(n\ge 5\). Therefore
\[
\frac{1}{a_n}\notin C
\qquad
(n\ge 5).
\]

It remains to inspect \(1\le n\le 4\). The case \(n=1\) gives \(\frac{1}{a_1}=1\in C\). For \(n\ge 2\), the denominator \(a_n\) is even, so its reduced denominator does not divide any power of \(3\); thus, \cref{lem:exact-base-m-test} applies exactly. The check for \(2\le n\le 4\) shows that \(n=3\) (where \(a_3=1!\cdot 2!\cdot 3!=12\)) gives the only other member; see \cref{tab:superfactorials}. Thus
\[
\left\{\frac{1}{a_1},\frac{1}{a_3}\right\}=\left\{1,\frac{1}{12}\right\}\subset C,
\]
and for every other \(n\) with \(1\le n\le 4\), the reciprocal \(\frac{1}{a_n}\) has a ternary expansion containing the digit \(1\), and hence does not belong to \(C\).
\end{proof}

\begin{table}[tp]
\centering

\renewcommand{\arraystretch}{1.15}
\begin{tabular}{rlr}
\toprule
$n$ & base-$3$ prefix of $\tfrac{1}{a_n}$ & first $1$ pos.\\
\midrule
$1$ & $0.\overline{2}$ & --- \\
$2$ & $0.\mathbf{1}11\ldots$ & $1$ \\
$3$ & $0.0\overline{02}$ & --- \\
$4$ & $0.000002\mathbf{1}12\ldots$ & $7$ \\
\bottomrule
\end{tabular}

\vspace*{12pt}
\caption{Ternary verification for reciprocals of superfactorials \(a_n=\prod_{k=1}^n k!\), \(1\le n\le 4\).
Column conventions are as in \cref{tab:factorials}.}
\label{tab:superfactorials}
\end{table}

\subsection{Products of polynomial values}

The previous two examples remain close to the factorial setting. We now move to a broader class in which Legendre's formula is no longer available, but a single congruence class modulo a suitable prime still furnishes the required linear valuation growth.

\begin{proposition}[Reciprocals of products of polynomial values]\label{prop:polynomial-products}
Fix \(m\ge 3\) and a missing-digit set \(K_{m,D}\).
If \(f\in\Z[x]\) is a nonconstant polynomial of degree \(d\ge 1\) such that \(f(k)>0\) for all \(k\ge 1\), and
\[
a_n=\prod_{k=1}^n f(k),
\]
then \(\left\{\frac{1}{a_n}:n\in\N\right\}\cap K_{m,D}\) is finite.
\end{proposition}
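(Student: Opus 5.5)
We apply \cref{cor:fixed-prime-criterion-largest-prime} with a prime \(p_0\) chosen so that \(f\) has a root modulo \(p_0\). The point is that \(Q_n\) then gains a bounded-below amount of \(p_0\)-divisibility from a positive proportion of the factors \(f(k)\), which forces linear growth of \(\vpp{p_0}(Q_n)\). On the other side, \(\Pplus(Q_n)\) is only polynomial in \(n\), so the right side of \eqref{eq:largest-prime-cutoff} is \(O(\log n)\).

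\textbf{Step 1: choosing \(p_0\).} We need a prime \(p_0\nmid m\) such that \(f(x)\equiv 0\pmod{p_0}\) has a solution \(x_0\in\Z\). By Schur's theorem, a nonconstant integer polynomial has infinitely many prime divisors among its values \(f(1),f(2),\dots\). So some prime \(p_0\nmid m\) divides some \(f(x_0)\); there are only finitely many primes dividing \(m\). Then \(p_0\mid f(k)\) for every \(k\equiv x_0\pmod{p_0}\). The number of such \(k\) in \([1,n]\) is at least \(\lfloor n/p_0\rfloor\). Since \(p_0\nmid m\), removing the \(m\)-part does not change \(p_0\)-adic valuations. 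Hence
\[
\vpp{p_0}(Q_n)=\sum_{k=1}^n\vpp{p_0}(f(k))\ge \left\lfloor \frac{n}{p_0}\right\rfloor\ge \frac{n}{p_0}-1\ldefeq\alpha(n).
\]

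\textbf{Step 2: bounding \(\Pplus(Q_n)\).} Every prime dividing \(Q_n\) divides some \(f(k)\) with \(1\le k\le n\), and \(f(k)\) is a positive integer. So \(\Pplus(Q_n)\le\max_{k\le n}f(k)\). Writing \(f(x)=\sum_{i\le d}c_ix^i\), we get \(f(k)\le Hn^d\) for \(k\le n\), where \(H=\sum_i|c_i|\). We take \(\beta(n)=\max\{2,Hn^d\}\); this satisfies \(\beta(n)\ge 2\) and \(\log_{p_0}(\beta(n)-1)\le d\log_{p_0}n+O(1)\).

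\textbf{Step 3: verifying the cutoff.} With these choices, \eqref{eq:largest-prime-cutoff} compares the linear function \(n/p_0-1\) with \(t+d\log_{p_0}n+\log_{p_0}(c_{m,D})+O(1)\), where \(t=t(m,p_0)\) is fixed. The left side therefore exceeds the right for all sufficiently large \(n\), so \cref{cor:fixed-prime-criterion-largest-prime} gives finiteness.

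The only step with genuine content is Step 1, the existence of a prime \(p_0\nmid m\) dividing some value of \(f\). I would prove it directly with Schur's argument rather than cite it. If \(f(0)=0\), take any prime \(p_0\nmid m\) and \(x_0=0\). Otherwise, suppose every prime divisor of every value of \(f\) lies in a finite set \(S\supseteq\{p: p\mid m\}\), and let \(P=\prod_{p\in S}p\). Then
\[
f\bigl(f(0)P\,y\bigr)=f(0)\,g(y),\qquad g(y)\equiv 1\pmod{P}.
\]
Since \(g\) is nonconstant, it takes a value \(g(y)\ne\pm1\) for some \(y\). That value has a prime factor, which cannot lie in \(S\) because \(g(y)\equiv 1\pmod P\). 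This prime divides a value of \(f\) and does not divide \(m\), a contradiction.

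Everything else is bookkeeping: all estimates are elementary, so the cutoff \(n_0\) is effective.
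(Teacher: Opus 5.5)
Your proof is correct and follows the paper's argument essentially verbatim. Schur's theorem gives a prime \(p_0\nmid m\) at which \(f\) has a root, so \(\vpp{p_0}(Q_n)\ge\lfloor n/p_0\rfloor\), and \(\Pplus(Q_n)\le\max\{2,C_f n^d\}\) keeps the right side of \eqref{eq:largest-prime-cutoff} at \(O(\log n)\); you differ only in giving a self-contained proof of Schur's theorem through \(f(f(0)Py)=f(0)g(y)\) with \(g(y)\equiv 1\pmod P\), which is valid (here \(P\ge 2\), so \(g(y)\neq 0\)), where the paper simply cites it.
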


\begin{proof}
By Schur's theorem on prime divisors of polynomial values (see \cite{Schur} or, e.g., \cite[Theorem 8.1]{Narkiewicz}), the set of primes dividing the values \(f(1),f(2),\dots\) is infinite.
Hence, there exists a prime \(p_0\nmid m\) and a residue class \(b\in\{0,1,\dots,p_0-1\}\) such that
\[
f(b)\equiv 0\pmod{p_0}.
\]

We write
\[
f(x)=c_dx^d+\cdots +c_0,
\qquad
C_f\defeq \sum_{i=0}^d |c_i|;
\]
for every \(k\ge 1\), we have
\[
0<f(k)\le C_f k^d.
\]
Thus, every prime divisor of
\[
Q_n=\frac{\prod_{k=1}^n f(k)}{\left(\prod_{k=1}^n f(k),m^\infty\right)}
\]
is at most \(C_f n^d\), so
\[
\Pplus(Q_n)\le \max\{2,C_f n^d\}.
\]

Since \(p_0\nmid m\), removing the \(m\)-part does not change the \(p_0\)-adic valuation:
\[
\vpp{p_0}(Q_n)=\sum_{k=1}^n \vpp{p_0}(f(k)).
\]
Among the integers \(1,\ldots,n\), each residue class modulo \(p_0\) occurs at least \(\lfloor \frac{n}{p_0}\rfloor\) times.
Hence, we have
\[
\vpp{p_0}(Q_n)\ge \#\{1\le k\le n:k\equiv b\!\!\!\pmod{p_0}\}\ge \left\lfloor\frac{n}{p_0}\right\rfloor.
\]
Thus, \cref{cor:fixed-prime-criterion-largest-prime} applies with
\[
\alpha(n)=\left\lfloor\frac{n}{p_0}\right\rfloor,
\qquad
\beta(n)=\max\{2,C_f n^d\}.
\]
As \(\alpha(n)\) grows linearly and \(\log_{p_0}\beta(n)=O(\log n)\), the cutoff inequality \eqref{eq:largest-prime-cutoff} holds for all sufficiently large \(n\).
\end{proof}

\begin{remark}[Effectivity of the auxiliary prime]\label{rem:polynomial-effective-prime}
For a given polynomial \(f\), the auxiliary prime \(p_0\) in \cref{prop:polynomial-products} can be found effectively by finite search: enumerate \(k=1,2,\dots\), factor \(f(k)\), and stop once a prime divisor \(p_0\nmid m\) is found. Schur's theorem guarantees that this procedure terminates and, once \(p_0\) is fixed, the cutoff furnished by \cref{cor:fixed-prime-criterion-largest-prime} is explicit.
\end{remark}

\Cref{prop:polynomial-products} logically generalizes \cref{prop:factorials}, which corresponds to the specialization \(f(k)=k\). More broadly, we can illustrate the result for other polynomials.

\begin{corollary}[Reciprocals of polynomial products: the case \(f(x)=x^2+1\)]
Let \(C=K_{3,\{0,2\}}\), and define
\[
a_n=\prod_{k=1}^n (k^2+1).
\]
Then, we have
\[
\left\{\frac{1}{a_n}:n\in\N\right\}\cap C
=
\left\{\frac{1}{10}\right\};
\]
equivalently,
\[
\frac{1}{a_n}\in C
\iff
n=2.
\]
\end{corollary}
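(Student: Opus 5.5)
We follow the two-step certification protocol of \cref{rem:finite-checks}: first obtain an explicit cutoff from \cref{cor:fixed-prime-criterion-largest-prime}, then check the finitely many remaining \(n\) by \cref{lem:exact-base-m-test}. We have \(m=3\), \(D=\{0,2\}\) and \(c_{3,D}=4\). Since \(3\nmid k^2+1\) for every \(k\), \(Q_n=a_n\). For \(p_0\) we take \(p_0=5\), because \(f(2)=5\equiv 0\), and also \(f(3)=10\equiv 0 \pmod 5\). Thus the two residue classes \(k\equiv 2,3\pmod 5\) each contribute, giving
\[
\alpha(n)=\#\{1\le k\le n: k\equiv \pm2 \pmod 5\}\approx \tfrac{2n}{5}.
\]
The overhead is \(t=\vpp{5}(3^{4}-1)=\vpp{5}(80)=1\). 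For the largest-prime bound we take \(\beta(n)=n^2+1\), since every prime factor of \(Q_n\) divides some \(k^2+1\le n^2+1\). The cutoff \eqref{eq:largest-prime-cutoff-exponential} then reads
\[
5^{\alpha(n)-1}>4n^2 .
\]

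To make this effective, we would use the crude bound \(\alpha(n)\ge 2\lfloor n/5\rfloor\). We then verify the inequality at a modest threshold (something like \(n\approx 20\), to be tuned) and check monotonicity. Each increase of \(n\) by \(5\) adds \(2\) to \(\alpha\), multiplying the left side by \(25\), while \(4n^2\) grows by a factor \(((n+5)/n)^2<25\). So once the inequality holds for five consecutive values of \(n\), it holds for all larger \(n\).

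The alternative is \(p_0=2\) with \(t=t_2(3)=2\). This only gains from odd \(k\), where \(\vpp{2}(k^2+1)=1\), so \(\alpha(n)=\lceil n/2\rceil\). The cutoff would be \(2^{\lceil n/2\rceil-2}>4n^2\), which requires \(n\) around \(30\). The prime \(5\) should therefore give a smaller cutoff. We could even combine the two with the sharper valuation \(\vpp{5}(k^2+1)\ge 2\) when \(k\equiv \pm 7\pmod{25}\), but the simple count should suffice.

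The remaining step is the finite check for \(1\le n<N_0\). Here \(a_1=2\) gives \(\tfrac12=0.\overline{1}\notin C\), and \(a_2=10\) gives \(\tfrac{1}{10}=0.\overline{0022}\in C\). For the other \(n\), we would run the exact long-division test of \cref{lem:exact-base-m-test}, recording the position of the first digit \(1\) in a table analogous to \cref{tab:superfactorials}. No reciprocal equals \(1\) and all denominators are coprime to \(3\), so the test applies directly.

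The main obstacle is keeping \(N_0\) small enough that the finite verification is genuinely manageable. Near the threshold the period lengths \(\ord_{a_n}(3)\) become huge, but the first digit \(1\) should appear early in the expansion, since \(1/a_n\) is tiny and its leading nonzero ternary digit is essentially random. So the practical check is to compute digits until a \(1\) appears rather than traversing a full period.
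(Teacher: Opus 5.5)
Your proposal is correct, but it uses a different auxiliary prime from the paper. The paper takes \(p_0=2\), so \(t=2\). It instantiates \cref{prop:polynomial-products} verbatim with the single residue class \(b\equiv 1\pmod 2\), \(\alpha(n)=\lfloor n/2\rfloor\) and \(\beta(n)=C_fn^d=2n^2\). It verifies \(2^{\alpha(n)-4}>\beta(n)-1\) at \(n=30,31\), propagates this by a step-\(2\) parity induction, and checks \(1\le n\le 29\) in \cref{tab:polynomial-products}.

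Your choice \(p_0=5\) gains in three ways:
\begin{itemize}
\item since \(-1\) is a square modulo \(5\), the two classes \(k\equiv\pm 2\pmod 5\) both contribute;
\item the overhead is smaller, \(t=\vpp{5}(3^4-1)=1\);
\item the prime bound \(\beta(n)=n^2+1\) is sharper.
\end{itemize}
The left side therefore grows like \(5^{2n/5}\approx 1.9^n\) rather than \(2^{n/2}\approx 1.41^n\).

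You left the threshold ``to be tuned,'' but it is easy to pin down. With your crude bound \(\alpha(n)\ge 2\lfloor n/5\rfloor\), the inequality \(5^{\alpha(n)-1}>4n^2\) holds for \(15\le n\le 19\); the worst case is \(5^5=3125>4\cdot 19^2=1444\). Your step-\(5\) induction is valid because \(((n+5)/n)^2<25\) for \(n\ge 2\), so it gives \(N_0=15\). With the exact count, already \(5^4=625>576=4\cdot 12^2\) at \(n=12\), and the checks at \(n=12,\dots,16\) give \(N_0=12\).

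The finite verification thus shrinks to \(n\le 14\) (or \(n\le 11\)). This is a subset of the cases in \cref{tab:polynomial-products}, which confirms that only \(n=2\) survives. The paper's route buys uniformity: it is a literal specialization of the general proposition with no extra arithmetic. Yours shows that choosing \(p_0\) with several roots of \(f\) and a small overhead \(t\) materially reduces the computational step.
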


\begin{proof}
We have \(m=3\), \(D=\{0,2\}\), \(c_{3,D}=4\), and we choose \(p_0=2\), so \(t=t_2(3)=2\).
For \(f(x)=x^2+1\), the proof of \cref{prop:polynomial-products} applies with \(b\equiv 1\pmod 2\), as
\[
f(1)=1^2+1=2\equiv 0\pmod 2.
\]

Here \(d=2\) and \(C_f=|1|+|0|+|1|=2\), so we may take
\[
\alpha(n)=\left\lfloor \frac{n}{2} \right\rfloor,
\qquad
\beta(n)=2n^2.
\]
Because \(k^2+1\not\equiv 0\pmod 3\) for any integer \(k\), every \(a_n\) is coprime to \(3\); in particular, every \(\frac{1}{a_n}\) lies outside the terminating regime.

Since \(t+\log_2(c_{3,D})=4\), the cutoff inequality \eqref{eq:largest-prime-cutoff} is equivalent to the form \eqref{eq:largest-prime-cutoff-exponential}:
\[
2^{\alpha(n)-4}>\beta(n)-1.
\]
At \(n=30\) and \(n=31\) we have
\begin{gather*}
\alpha(30)=15,\qquad \beta(30)-1=1799<2^{11},\\
\alpha(31)=15,\qquad \beta(31)-1=1921<2^{11},
\end{gather*}
respectively, so \eqref{eq:largest-prime-cutoff} holds in each case.

Moreover, for every \(n\ge 1\),
\[
\alpha(n+2)-\alpha(n)=1,
\]
while for every \(n\ge 5\),
\[
\log_2(\beta(n+2)-1)-\log_2(\beta(n)-1)
=
\log_2\!\left(\frac{2(n+2)^2-1}{2n^2-1}\right)
<
1.
\]
Hence, the quantity
\[
\alpha(n)-4-\log_2(\beta(n)-1)
\]
strictly increases when \(n\) is replaced by \(n+2\) for every \(n\ge 5\). Since the cutoff inequality holds at both \(n=30\) and \(n=31\), it follows by parity induction that it holds for every \(n\ge 30\). Consequently,
\[
\frac{1}{a_n}\notin C
\qquad
(n\ge 30).
\]

It remains to inspect the cases \(1\le n\le 29\). Applying \cref{lem:exact-base-m-test} to each \(\frac{1}{a_n}\) for \(1\le n\le 29\) (see \cref{tab:polynomial-products}) shows that
\[
\frac{1}{a_2}=\frac{1}{10}\in C,
\]
and for every other \(n\) with \(1\le n\le 29\), the reciprocal \(\frac{1}{a_n}\) does not lie in \(C\).
\end{proof}

\begin{table}[tp]
\centering

\renewcommand{\arraystretch}{1.15}
\begin{tabular}{rlr}
\toprule
$n$ & base-$3$ prefix of $\tfrac{1}{a_n}$ & first $1$ pos.\\
\midrule
$1$ & $0.\mathbf{1}11\ldots$ & $1$ \\
$2$ & $0.\overline{0022}$ & --- \\
$3$ & $0.00002\mathbf{1}02\ldots$ & $6$ \\
$4$ & $0.000000\mathbf{1}02\ldots$ & $7$ \\
$5$ & $0.000000000\mathbf{1}10\ldots$ & $10$ \\
$6$ & $0.00{\ldots}00\mathbf{1}21\ldots$ & $23$ \\
$7$ & $0.0000000000000000\mathbf{1}12\ldots$ & $17$ \\
$8$ & $0.00{\ldots}00\mathbf{1}22\ldots$ & $21$ \\
$9$ & $0.00{\ldots}00\mathbf{1}22\ldots$ & $25$ \\
$10$ & $0.00{\ldots}00\mathbf{1}12\ldots$ & $29$ \\
$11$ & $0.00{\ldots}00\mathbf{1}00\ldots$ & $33$ \\
$12$ & $0.00{\ldots}00\mathbf{1}20\ldots$ & $38$ \\
$13$ & $0.00{\ldots}02\mathbf{1}10\ldots$ & $44$ \\
$14$ & $0.00{\ldots}00\mathbf{1}00\ldots$ & $47$ \\
$15$ & $0.00{\ldots}00\mathbf{1}00\ldots$ & $52$ \\
$16$ & $0.00{\ldots}00\mathbf{1}00\ldots$ & $57$ \\
$17$ & $0.00{\ldots}02\mathbf{1}21\ldots$ & $64$ \\
$18$ & $0.00{\ldots}00\mathbf{1}22\ldots$ & $68$ \\
$19$ & $0.00{\ldots}00\mathbf{1}02\ldots$ & $73$ \\
$20$ & $0.00{\ldots}02\mathbf{1}01\ldots$ & $80$ \\
$21$ & $0.00{\ldots}00\mathbf{1}02\ldots$ & $84$ \\
$22$ & $0.00{\ldots}00\mathbf{1}22\ldots$ & $90$ \\
$23$ & $0.00{\ldots}00\mathbf{1}11\ldots$ & $108$ \\
$24$ & $0.00{\ldots}00\mathbf{1}01\ldots$ & $101$ \\
$25$ & $0.00{\ldots}00\mathbf{1}02\ldots$ & $107$ \\
$26$ & $0.00{\ldots}00\mathbf{1}10\ldots$ & $113$ \\
$27$ & $0.00{\ldots}00\mathbf{1}10\ldots$ & $119$ \\
$28$ & $0.00{\ldots}00\mathbf{1}02\ldots$ & $125$ \\
$29$ & $0.00{\ldots}00\mathbf{1}01\ldots$ & $131$ \\
\bottomrule
\end{tabular}

\vspace*{12pt}
\caption{Ternary verification for polynomial products \(a_n=\prod_{k=1}^n(k^2+1)\), \(1\le n\le 29\).
Column conventions are as in \cref{tab:factorials}.}
\label{tab:polynomial-products}
\end{table}

\subsection{Products of Fibonacci numbers}

The preceding examples all lie in a regime where the largest prime factor of the denominator is at most polynomial in \(n\). We next consider a sequence whose prime factors can be exponentially large, pushing \cref{cor:fixed-prime-criterion-largest-prime} into a genuinely marginal range.

Let \((F_k)_{k\ge 1}\) be the Fibonacci sequence defined by \(F_1=F_2=1\) and \(F_{k+2}=F_{k+1}+F_k\).

\begin{lemma}\label{lem:fibonacci-sum}
Let
\[
S(n)\defeq \sum_{k=1}^n \vpp{2}(F_k).
\]
Then for every \(n\ge 1\),
\begin{equation}
S(n)\ge
\left\lfloor\frac{n}{3}\right\rfloor
+
2\left\lfloor\frac{n}{6}\right\rfloor
+
\sum_{j=2}^{\lfloor \log_2(n/3)\rfloor}
\left\lfloor\frac{n}{3\cdot 2^j}\right\rfloor,
\label{eq:fibonacci-sum-lower-bound}
\end{equation}
where the final sum is empty if \(n<12\).
In particular, for \(n\ge 12\),
\[
S(n)\ge \frac{5}{6}n-\log_2 n-4.
\]
\end{lemma}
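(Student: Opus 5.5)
The plan is to use the standard description of \(\vpp{2}(F_k)\): \(F_k\) is even iff \(3\mid k\). For \(k=3\ell\) with \(\ell\) odd, \(\vpp{2}(F_k)=1\). For \(k=3\ell\) with \(\ell\) even, \(\vpp{2}(F_k)=\vpp{2}(k)+2=\vpp{2}(\ell)+2\). Concretely, \(\vpp{2}(F_6)=\vpp{2}(8)=3\), \(\vpp{2}(F_{12})=\vpp{2}(144)=4\), and in general \(\vpp{2}(F_{3\cdot 2^j u})=j+2\) for \(u\) odd and \(j\ge 1\). I would cite this as the known \(2\)-adic valuation of Fibonacci numbers (Lengyel). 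Alternatively, it can be derived from \(F_{2k}=F_kL_k\) together with \(\vpp{2}(L_k)\), where \(\vpp{2}(L_k)=1\) if \(k\equiv 3\pmod 6\), \(2\) if \(6\mid k\), and \(0\) otherwise. The doubling step is the only place needing care.

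Given this formula, I would write \(\vpp{2}(F_k)\) as a sum of indicator contributions. For \(k=3\ell\), define \(g(\ell)=1\) if \(\ell\) is odd and \(g(\ell)=\vpp{2}(\ell)+2\) if \(\ell\) is even. Then
\[
g(\ell)=\mathbf{1}[\,1\mid \ell\,]+2\cdot\mathbf{1}[\,2\mid\ell\,]+\sum_{j\ge 2}\mathbf{1}[\,2^j\mid \ell\,].
\]
This identity holds with exact equality: an odd \(\ell\) gives \(1\), while an \(\ell\) with \(\vpp{2}(\ell)=v\ge1\) gives \(1+2+(v-1)=v+2\). Summing over \(k\le n\), i.e.\ over \(\ell\le \lfloor n/3\rfloor\), yields
\[
S(n)=\left\lfloor\frac{n}{3}\right\rfloor+2\left\lfloor\frac{n}{6}\right\rfloor+\sum_{j\ge2}\left\lfloor\frac{n}{3\cdot2^j}\right\rfloor.
\]
Here I use \(\lfloor\lfloor n/3\rfloor/2^j\rfloor=\lfloor n/(3\cdot 2^j)\rfloor\). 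The terms with \(2^j>n/3\) vanish, so truncating the sum at \(j\le\lfloor\log_2(n/3)\rfloor\) gives \eqref{eq:fibonacci-sum-lower-bound}, in fact with equality. When \(n<12\), we have \(\log_2(n/3)<2\), so the final sum is empty.

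For the explicit bound, I would use \(\lfloor x\rfloor\ge x-1\) on each term. Set \(J=\lfloor\log_2(n/3)\rfloor\ge 2\) for \(n\ge12\). The first two terms give at least \(n/3-1+2(n/6-1)=\frac{2}{3}n-3\). The tail gives at least
\[
\sum_{j=2}^{J}\left(\frac{n}{3\cdot2^j}-1\right)=\frac{n}{3}\left(\frac12-2^{-J}\right)-(J-1).
\]
Since \(2^{J}>n/6\), we have \(\frac{n}{3}2^{-J}<2\). Also \(J-1\le\log_2 n-\log_2 3-1\). Combining,
\[
S(n)\ge \frac{2}{3}n+\frac{n}{6}-3-2-\log_2 n+\log_2 3+1=\frac56 n-\log_2 n-5+\log_2 3.
\]
This exceeds \(\frac56n-\log_2n-4\) because \(\log_23>1\). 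The main obstacle is justifying the Fibonacci \(2\)-adic valuation formula; the rest is bookkeeping with floors.
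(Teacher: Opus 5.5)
Your argument is correct if you take Lengyel's exact formula as given: \(\nu_2(F_k)=0\), \(1\), or \(\nu_2(k)+2\) according as \(3\nmid k\), \(k\equiv 3\pmod 6\), or \(6\mid k\). It differs from the paper mainly in this input.

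\begin{itemize}
\item The paper proves only a pointwise \emph{lower} bound, and does so self-containedly. From \(F_6=8\), the identity \(F_{2u}=F_uL_u\), and the single fact that \(L_k\) is even when \(3\mid k\), it gets \(\nu_2(F_{3\cdot 2^j})\ge j+2\) by induction. It then extends this to every multiple of \(3\cdot 2^j\) via \(F_d\mid F_k\).
\item Your route gives equality in \eqref{eq:fibonacci-sum-lower-bound}, at the cost of needing the exact valuation rather than an inequality.
\item Your tail estimate uses \(J\le\log_2(n/3)\) instead of the paper's \(J\le\log_2 n\), so it is slightly sharper.
\end{itemize}

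Two corrections.

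First, the Lucas valuations in your alternative derivation are swapped. In fact \(\nu_2(L_k)=2\) for \(k\equiv 3\pmod 6\) (e.g.\ \(L_3=4\), \(L_9=76\)), and \(\nu_2(L_k)=1\) for \(6\mid k\) (e.g.\ \(L_6=18\), \(L_{12}=322\)). With your values, \(F_6=F_3L_3\) would give \(\nu_2(F_6)=2\), contradicting \(F_6=8\). Iterating the doubling step would give \(\nu_2(F_{3\cdot 2^j u})=2j\) rather than \(j+2\). So that alternative, as written, does not establish the formula. If you want to avoid the citation, the paper's inequality-only induction is the simplest fix, since the lemma needs only a lower bound.

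Second, in your final display the constants are \(-3-2+1=-4\). You therefore get \(\frac56 n-\log_2 n-4+\log_2 3\), not \(-5+\log_2 3\). This only strengthens the bound, and your conclusion stands.
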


\begin{proof}
The main inequality \eqref{eq:fibonacci-sum-lower-bound} can be derived from Lengyel's explicit characterization of the \(2\)-adic valuation of Fibonacci numbers \cite[Lemma~2]{Lengyel}. We include a direct proof here for completeness.

We introduce the Lucas numbers \((L_k)_{k\ge 0}\) by \(L_0=2\), \(L_1=1\), and \(L_{k+2}=L_{k+1}+L_k\).
The doubling identity
\[
F_{2u}=F_uL_u
\]
is standard (see, e.g., \cite{Koshy}).

Reducing their defining recurrences modulo \(2\), we see that both \((F_k)\) and \((L_k)\) are periodic with period \(3\).
Hence, 
\[
\vpp{2}(F_k)\ge 1 \iff 3\mid k,
\qquad
\vpp{2}(L_k)\ge 1 \iff 3\mid k.
\]

We claim that for every \(j\ge 1\),
\[
\vpp{2}(F_{3\cdot 2^j})\ge j+2.
\]
Indeed, \(F_6=8\), so the claim holds for \(j=1\).
If it holds for some \(j\ge 1\), then
\[
F_{3\cdot 2^{j+1}}
=
F_{2(3\cdot 2^j)}
=
F_{3\cdot 2^j}L_{3\cdot 2^j},
\]
and \(3\mid 3\cdot 2^j\), so \(L_{3\cdot 2^j}\) is even.
Therefore, 
\[
\vpp{2}(F_{3\cdot 2^{j+1}})
\ge
\vpp{2}(F_{3\cdot 2^j})+1
\ge
(j+2)+1.
\]

Next, \(F_d\mid F_k\) whenever \(d\mid k\).
Hence, if \(3\cdot 2^j\mid k\), then
\[
\vpp{2}(F_k)\ge \vpp{2}(F_{3\cdot 2^j})\ge j+2.
\]
Therefore, pointwise in \(k\),
\[
\vpp{2}(F_k)
\ge
\mathbf{1}_{3\mid k}
+
2\,\mathbf{1}_{6\mid k}
+
\sum_{j=2}^\infty \mathbf{1}_{3\cdot 2^j\mid k}.
\]
Summing over \(1\le k\le n\) yields the stated lower bound for \(S(n)\).

Now assume \(n\ge 12\), and put \(J=\lfloor \log_2(\frac{n}{3})\rfloor\).
Using \(\lfloor x\rfloor\ge x-1\), we obtain
\begin{equation}
S(n)
\ge
\left(\frac{n}{3}-1\right)
+
2\left(\frac{n}{6}-1\right)
+
\sum_{j=2}^J\left(\frac{n}{3\cdot 2^j}-1\right).
\label{eq:fibonacci-sum-floor-bound}
\end{equation}
Since we have
\[
\sum_{j=2}^J \frac{1}{2^j}
=
\frac{1}{2}-\frac{1}{2^J},
\]
and \(2^{J+1}>n/3\), we obtain
\[
\frac{1}{2^J}\le \frac{6}{n}.
\]
Therefore, we find
\[
\sum_{j=2}^J \frac{1}{2^j}\ge \frac{1}{2}-\frac{6}{n};
\]
substituting this into \eqref{eq:fibonacci-sum-floor-bound} gives
\[
S(n)
\ge
\frac{2n}{3}-3
+
\frac{n}{3}\left(\frac{1}{2}-\frac{6}{n}\right)
-(J-1)
=
\frac{5n}{6}-J-4.
\]
Finally, we have \(J\le \log_2 n\), so
\[
S(n)\ge \frac{5}{6}n-\log_2 n-4
\]
as claimed.
\end{proof}

\begin{proposition}[Reciprocals of products of Fibonacci numbers]\label{prop:fibonacci-products}
Fix \(m\ge 3\) odd and a missing-digit set \(K_{m,D}\), and let
\[
a_n=\prod_{k=1}^n F_k.
\]
Then, the intersection \(\left\{\frac{1}{a_n}:n\in\N\right\}\cap K_{m,D}\) is finite.
\end{proposition}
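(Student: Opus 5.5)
We apply \cref{cor:fixed-prime-criterion-largest-prime} with \(p_0=2\). This prime is admissible because \(m\) is odd, so \(2\nmid m\). With this choice the overhead is \(t=t(m,2)=\vpp{2}(m^2-1)-1\), a constant depending only on \(m\). Since \(2\nmid m\), removing the \(m\)-part does not change the \(2\)-adic valuation, so
\[
\vpp{2}(Q_n)=\sum_{k=1}^n\vpp{2}(F_k)=S(n).
\]
By \cref{lem:fibonacci-sum} we may therefore take
\[
\alpha(n)=\frac{5}{6}n-\log_2 n-4 \qquad (n\ge 12).
\]

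Next we need an upper bound \(\beta(n)\) for \(\Pplus(Q_n)\). Every prime divisor of \(Q_n\) divides some \(F_k\) with \(k\le n\), so it is at most \(F_n\). The standard estimate \(F_k\le \varphi^{k-1}\), with \(\varphi=(1+\sqrt5)/2\), follows by induction from the recurrence. Hence we may take
\[
\beta(n)=\max\{2,\varphi^{\,n-1}\},
\]
which gives
\[
\log_2(\beta(n)-1)\le (n-1)\log_2\varphi .
\]

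The crux is then comparing linear rates in the cutoff inequality \eqref{eq:largest-prime-cutoff}. The left side grows like \(\tfrac56 n\), and the right side grows like \((\log_2\varphi)\,n+O(1)\). Since \(\log_2\varphi\approx 0.694<5/6\approx 0.833\), the difference
\[
\alpha(n)-t-\log_2(\beta(n)-1)-\log_2(c_{m,D})\ \ge\ \Bigl(\tfrac56-\log_2\varphi\Bigr)n-\log_2 n-O_{m,D}(1)
\]
tends to \(+\infty\). So \eqref{eq:largest-prime-cutoff} holds for all sufficiently large \(n\), and \cref{cor:fixed-prime-criterion-largest-prime} yields finiteness.

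The main obstacle is this marginal comparison of slopes. A cruder valuation estimate such as \(S(n)\ge\lfloor n/3\rfloor\) has slope \(1/3<\log_2\varphi\), and with it the largest-prime-factor form would fail. This is precisely why \cref{lem:fibonacci-sum} was needed with the sharper constant \(5/6\). It also explains why \(p_0=2\) is the right choice: the density of its contribution beats the exponential growth of the largest prime factor.

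If one wanted an explicit cutoff, it would follow from checking that the displayed difference is monotone eventually, since its increments are bounded below by \(\tfrac56-\log_2\varphi\) up to fluctuations coming from the floors. For the finiteness statement, however, the asymptotic comparison suffices.
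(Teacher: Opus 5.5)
Your proof is correct and follows the paper's argument essentially verbatim. You take $p_0=2$ and $\alpha(n)=\frac56 n-\log_2 n-4$ from \cref{lem:fibonacci-sum}, bound $\Pplus(Q_n)$ exponentially through $F_n$, and compare the slopes $\frac56>\log_2\varphi$ in \cref{cor:fixed-prime-criterion-largest-prime}; the only cosmetic difference is that you use $F_k\le\varphi^{k-1}$ with $\beta(n)=\max\{2,\varphi^{n-1}\}$, which explicitly secures the hypothesis $\beta(n)\ge 2$, where the paper uses $F_n<\varphi^n$ from Binet's formula.
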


\begin{proof}
We apply \cref{cor:fixed-prime-criterion-largest-prime} with \(p_0=2\).
As \(m\) is odd, removing the \(m\)-part removes only odd prime factors; therefore
\[
\vpp{2}(Q_n)=\vpp{2}(a_n)=\sum_{k=1}^n \vpp{2}(F_k).
\]
By \cref{lem:fibonacci-sum}, for \(n\ge 12\), we have
\[
\vpp{2}(Q_n)\ge \frac{5}{6}n-\log_2 n-4.
\]

Every prime divisor of \(Q_n\) is also a prime divisor of \(a_n\), and hence divides some \(F_k\) with \(k\le n\).
Writing \(\varphi=(1+\sqrt5)/2\), Binet's formula (see~\cite{Koshy}) gives \(F_n<\varphi^n\), so
\[
\Pplus(Q_n)\le F_n<\varphi^n.
\]
Thus, \cref{cor:fixed-prime-criterion-largest-prime} applies with
\[
\alpha(n)=\frac{5}{6}n-\log_2 n-4,
\qquad
\beta(n)=\varphi^n
\qquad
(n\ge 12).
\]
Here the cutoff inequality \eqref{eq:largest-prime-cutoff} holds for all sufficiently large \(n\) because
\[
\frac{5}{6}>\log_2\varphi.
\qedhere
\]
\end{proof}

As with our earlier applications, we can make the result completely explicit for the standard Cantor set.

\begin{corollary}[Reciprocals of products of Fibonacci numbers in the standard Cantor set]
For \(C=K_{3,\{0,2\}}\) and
\[
a_n=\prod_{k=1}^n F_k,
\]
(where \(F_k\) denotes the \(k\)-th Fibonacci number), we have
\[
\left\{\frac{1}{a_n}:n\in\N\right\}\cap C
=
\left\{1,\frac{1}{30}\right\};
\]
equivalently,
\[
\frac{1}{a_n}\in C
\iff
n\in\{1,2,5\}.
\]
\end{corollary}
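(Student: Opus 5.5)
The plan follows the two-step certification protocol of \cref{rem:finite-checks}. First I obtain an explicit cutoff from \cref{cor:fixed-prime-criterion-largest-prime}, with \(m=3\), \(D=\{0,2\}\), \(c_{3,D}=4\), \(p_0=2\) and \(t=t_2(3)=2\). Then I check the finitely many remaining \(n\) exactly with \cref{lem:exact-base-m-test}.

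\textbf{Cutoff.} As in \cref{prop:fibonacci-products}, I take \(\alpha(n)=S(n)\) with the explicit floor-sum lower bound \eqref{eq:fibonacci-sum-lower-bound}, rather than the lossy linear bound. For \(\beta\) I take \(\beta(n)=\max\{2,F_n\}\), or \(\varphi^{n}\).

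\emph{Justifying \(\beta\).} Any prime dividing \(Q_n\) divides some \(F_k\) with \(k\le n\), so it is at most \(F_n\).

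\emph{The inequality to verify.} In the form \eqref{eq:largest-prime-cutoff-exponential} the cutoff becomes
\[
2^{S(n)-2}>4(F_n-1), \qquad\text{equivalently}\qquad S(n)-4>\log_2(F_n-1).
\]

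\emph{Propagating to all large \(n\).}
\begin{itemize}
\item The increment of the right side is \(\log_2(F_{n+1}/F_n)\le \log_2(3/2)<1\) for \(n\ge 2\), and in the limit it tends to \(\log_2\varphi\approx 0.694\).
\item The left side gains on average \(\tfrac{1}{3}\cdot(\text{average of }\nu_2(F_{3j}))\). By Lengyel's formula (\(\nu_2(F_k)=1\) if \(k\equiv 3\pmod 6\), and \(\nu_2(F_k)=\nu_2(k)+2\) if \(6\mid k\)) this average is \(1\) per step.
\item Increments are not monotone, since \(S\) stays flat on \(k\not\equiv 0\pmod 3\). I will therefore compare blocks of \(6\): over \(k\in\{n+1,\dots,n+6\}\) the left side gains at least \(1+3=4\), while the right side gains at most \(\log_2(F_{n+6}/F_n)<6\log_2\varphi+\epsilon\approx 4.17\).
\end{itemize}

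\emph{Main obstacle.} The block-of-\(6\) comparison is too tight for the naive pointwise bound. I expect the fix to be blocks of \(12\) (gain \(\ge 1+3+1+4=9\) versus \(\approx 8.33\)), or the full Lengyel values, which give an average gain of \(1\) per step against \(0.694\). Concretely, I would verify the cutoff directly at \(12\) consecutive values \(n_1,\dots,n_1+11\), computing \(S(n)\) and \(F_n\) exactly. I would then propagate by the block-of-\(12\) induction, exactly like the parity induction in the \(x^2+1\) corollary. I expect \(n_1\) to be on the order of a few dozen. Since \(S(n)\approx n\) and \(\log_2 F_n\approx 0.694n\), the gap \(0.306n\) must exceed about \(4\) plus the fluctuation, so \(n_1\approx 20\)–\(30\).

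\textbf{Finite check.} For \(n<n_1\) I reduce \(1/a_n\) to lowest terms.
\begin{itemize}
\item For \(n=1,2\) we have \(a_n=1\), so \(1/a_n=1=0.\overline{2}\in C\).
\item For \(n=3,4\) the denominators are \(2\) and \(6\). Then \(1/2=0.\overline{1}\notin C\). In lowest terms \(1/6\) has denominator \(6\), which does not divide a power of \(3\), so \cref{lem:exact-base-m-test} applies, and indeed \(1/6=0.0\overline{1}\notin C\).
\item For \(n=5\), \(a_5=30\) and \(1/30=0.00\overline{0022}\in C\), which I confirm via the remainder recurrence.
\item For \(n\ge 3\) every denominator is even, so none divides a power of \(3\) and \cref{lem:exact-base-m-test} decides membership exactly. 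I will run it on all \(6\le n<n_1\) and tabulate the position of the first digit \(1\), as in the earlier tables.
\end{itemize}
This yields membership exactly for \(n\in\{1,2,5\}\), giving the set \(\{1,1/30\}\).
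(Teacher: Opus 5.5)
Your route works, and it differs from the paper's at the cutoff step. The paper smooths both sides, taking $\alpha(n)=\frac56 n-\log_2 n-4$ from \cref{lem:fibonacci-sum} and $\beta(n)=\varphi^n$. Then \eqref{eq:largest-prime-cutoff} reduces to positivity of the single increasing function $h(x)=\delta x-\log_2 x-8$. This needs no case analysis, but it only gives $n\ge 106$, leaving $105$ exact checks.

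You keep the exact data $S(n)$ and $F_n$. By Lengyel's formula the floor-sum \eqref{eq:fibonacci-sum-lower-bound} equals $S(n)=\lfloor n/3\rfloor+2\lfloor n/6\rfloor+\nu_2(\lfloor n/6\rfloor!)$ exactly, so your two choices of $\alpha$ coincide. You then absorb the sawtooth of $S$ with a period-$12$ induction, and that induction is sound:
\begin{itemize}
\item Twelve consecutive indices contain two $k\equiv 3\pmod 6$ and two consecutive multiples of $6$, one of them divisible by $12$. Hence $S(n+12)-S(n)\ge 9$.
\item $F_{n+12}=144F_{n+1}+89F_n\le 329F_n$ for $n\ge 3$. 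This gives $(F_{n+12}-1)/(F_n-1)<2^9$ once $F_n\ge 3$.
\end{itemize}
Your route buys a much shorter finite range at the cost of twelve explicit base cases. The paper's route buys a one-line analytic tail argument at the cost of a longer table.

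Your heuristic for where the base block sits is wrong, however. The mean of $\nu_2(F_k)$ is $\frac56$ per step, not $1$. Odd multiples of $3$ contribute $1$, while multiples of $6$ contribute $\nu_2(6i)+2$, whose mean is $4$. So the margin grows only like $(\frac56-\log_2\varphi)n\approx 0.139n$, which is exactly why the paper's linear bound has slope $\frac56$.

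Concretely, the cutoff still fails at $n=47$, where $S(47)-4=29<\log_2(F_{47}-1)\approx 31.47$. It first holds on a full block of twelve at $48\le n\le 59$, tightest at $n=59$, where $S(59)-4=40$ and $F_{59}<2^{40}$. So $n_1=48$, not $20$ to $30$, and your exact check must cover $6\le n\le 47$. The paper's table confirms that this range contains no members.

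Two minor slips, neither of which affects the conclusion:
\begin{itemize}
\item $F_{n+1}/F_n$ is not bounded by $\frac32$, since it tends to $\varphi$. Use $\frac53$ for $n\ge 3$.
\item In base $3$ one has $\frac{1}{30}=0.0\overline{0022}$; your $0.00\overline{0022}$ equals $\frac{1}{90}$.
\end{itemize}
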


\begin{proof}
We again work with \(m=3\) and \(D=\{0,2\}\), so \(c_{3,D}=4\). Choosing \(p_0=2\), we again have \(t=t_2(3)=2\).
Since \(m=3\) is odd, \cref{prop:fibonacci-products} applies to the sequence
\[
a_n=\prod_{k=1}^n F_k.
\]

Using the lower bound established in the proof of \cref{prop:fibonacci-products}, we may take
\begin{gather*}
\alpha(n)=\frac{5n}{6}-\log_2 n-4
\qquad (n\ge 12),\\
\beta(n)=\varphi^n,
\qquad
\varphi=\frac{1+\sqrt5}{2}.
\end{gather*}
Since \(\varphi^n-1<\varphi^n\), it is enough for \eqref{eq:largest-prime-cutoff} to prove
\[
\frac{5n}{6}-\log_2 n-4
>
2+n\log_2\varphi+\log_2(4),
\]
that is,
\[
\left(\frac{5}{6}-\log_2\varphi\right)n>\log_2 n+8.
\]
Now \(\log_2\varphi<0.6943\), so
\[
\delta\defeq \frac{5}{6}-\log_2\varphi > 0.1390.
\]
Consider
\[
h(x)\defeq \delta x-\log_2 x-8.
\]
Then
\[
h'(x)=\delta-\frac{1}{x\ln 2}>0
\qquad (x\ge 11),
\]
so \(h\) is increasing on \([11,\infty)\). Moreover,
\[
h(106)>0.1390\cdot 106-\log_2 106-8>14.734-6.728-8>0.
\]
Therefore \(h(n)>0\) for every \(n\ge 106\), and hence \eqref{eq:largest-prime-cutoff} holds for all \(n\ge 106\). Consequently,
\[
\frac{1}{a_n}\notin C
\qquad
(n\ge 106).
\]

It remains to check \(1\le n\le 105\). The cases \(n=1,2\) give \(\frac{1}{a_n}=1\in C\). For \(n\ge 3\), the factor \(F_3=2\) divides \(a_n\), so the reduced denominator of \(\frac{1}{a_n}\) does not divide any power of \(3\); thus, \cref{lem:exact-base-m-test} applies exactly on the remaining finite range. The check shows that
\[
a_5=F_1F_2F_3F_4F_5=1\cdot 1\cdot 2\cdot 3\cdot 5=30,
\]
so \(\frac{1}{a_5}=\frac{1}{30}\in C\), and that no other \(3\le n\le 105\) contributes; see \cref{tab:fibonacci-products}. Therefore
\[
\left\{\frac{1}{a_n}:n\in\N\right\}\cap C
=
\left\{1,\frac{1}{30}\right\},
\]
as claimed.
\end{proof}

\begin{table}[tp]
\centering
{\tiny
\renewcommand{\arraystretch}{1.05}
\[
\begin{array}{rlr @{\hspace{1.5em}\vrule width 0.4pt\hspace{1.5em}} rlr @{\hspace{1.5em}\vrule width 0.4pt\hspace{1.5em}} rlr}
n & \text{prefix of $1/a_n$} & \text{1st pos.} & n & \text{prefix of $1/a_n$} & \text{1st pos.} & n & \text{prefix of $1/a_n$} & \text{1st pos.} \\
\hline
1 & 0.\overline{2} & \text{--} & 36 & 0.00{\ldots}00\mathbf{1}12\ldots & 266 & 71 & 0.00{\ldots}00\mathbf{1}02\ldots & 1068 \\
2 & 0.\overline{2} & \text{--} & 37 & 0.00{\ldots}00\mathbf{1}20\ldots & 288 & 72 & 0.00{\ldots}00\mathbf{1}12\ldots & 1099 \\
3 & 0.\mathbf{1}11\ldots & 1 & 38 & 0.00{\ldots}00\mathbf{1}00\ldots & 297 & 73 & 0.00{\ldots}00\mathbf{1}02\ldots & 1130 \\
4 & 0.0\mathbf{1}11\ldots & 2 & 39 & 0.00{\ldots}22\mathbf{1}22\ldots & 326 & 74 & 0.00{\ldots}00\mathbf{1}20\ldots & 1162 \\
5 & 0.0\overline{0022} & \text{--} & 40 & 0.00{\ldots}22\mathbf{1}02\ldots & 333 & 75 & 0.00{\ldots}00\mathbf{1}11\ldots & 1194 \\
6 & 0.0000\mathbf{1}00\ldots & 5 & 41 & 0.00{\ldots}20\mathbf{1}20\ldots & 350 & 76 & 0.00{\ldots}02\mathbf{1}12\ldots & 1228 \\
7 & 0.000000020022\mathbf{1}00\ldots & 13 & 42 & 0.00{\ldots}00\mathbf{1}00\ldots & 365 & 77 & 0.00{\ldots}02\mathbf{1}12\ldots & 1261 \\
8 & 0.0000000000220\mathbf{1}00\ldots & 14 & 43 & 0.00{\ldots}22\mathbf{1}11\ldots & 386 & 78 & 0.00{\ldots}00\mathbf{1}12\ldots & 1293 \\
9 & 0.000000000000020\mathbf{1}02\ldots & 16 & 44 & 0.00{\ldots}00\mathbf{1}12\ldots & 402 & 79 & 0.00{\ldots}00\mathbf{1}21\ldots & 1327 \\
10 & 0.0000000000000000\mathbf{1}00\ldots & 17 & 45 & 0.00{\ldots}00\mathbf{1}12\ldots & 421 & 80 & 0.00{\ldots}00\mathbf{1}02\ldots & 1361 \\
11 & 0.00{\ldots}22\mathbf{1}22\ldots & 24 & 46 & 0.00{\ldots}00\mathbf{1}00\ldots & 440 & 81 & 0.00{\ldots}00\mathbf{1}20\ldots & 1396 \\
12 & 0.00{\ldots}00\mathbf{1}12\ldots & 26 & 47 & 0.00{\ldots}00\mathbf{1}01\ldots & 460 & 82 & 0.00{\ldots}00\mathbf{1}10\ldots & 1431 \\
13 & 0.00{\ldots}00\mathbf{1}20\ldots & 31 & 48 & 0.00{\ldots}02\mathbf{1}20\ldots & 482 & 83 & 0.00{\ldots}22\mathbf{1}12\ldots & 1475 \\
14 & 0.00{\ldots}00\mathbf{1}00\ldots & 36 & 49 & 0.00{\ldots}00\mathbf{1}01\ldots & 501 & 84 & 0.00{\ldots}00\mathbf{1}22\ldots & 1503 \\
15 & 0.00{\ldots}00\mathbf{1}02\ldots & 42 & 50 & 0.00{\ldots}22\mathbf{1}22\ldots & 525 & 85 & 0.00{\ldots}00\mathbf{1}01\ldots & 1539 \\
16 & 0.00{\ldots}22\mathbf{1}22\ldots & 51 & 51 & 0.00{\ldots}00\mathbf{1}11\ldots & 544 & 86 & 0.00{\ldots}00\mathbf{1}02\ldots & 1576 \\
17 & 0.00{\ldots}00\mathbf{1}02\ldots & 55 & 52 & 0.00{\ldots}00\mathbf{1}10\ldots & 566 & 87 & 0.00{\ldots}02\mathbf{1}02\ldots & 1615 \\
18 & 0.00{\ldots}00\mathbf{1}01\ldots & 62 & 53 & 0.00{\ldots}02\mathbf{1}11\ldots & 590 & 88 & 0.00{\ldots}22\mathbf{1}12\ldots & 1656 \\
19 & 0.00{\ldots}00\mathbf{1}20\ldots & 70 & 54 & 0.00{\ldots}20\mathbf{1}11\ldots & 615 & 89 & 0.00{\ldots}02\mathbf{1}01\ldots & 1693 \\
20 & 0.00{\ldots}00\mathbf{1}20\ldots & 78 & 55 & 0.00{\ldots}00\mathbf{1}21\ldots & 635 & 90 & 0.00{\ldots}00\mathbf{1}00\ldots & 1728 \\
21 & 0.00{\ldots}00\mathbf{1}00\ldots & 86 & 56 & 0.00{\ldots}00\mathbf{1}02\ldots & 668 & 91 & 0.00{\ldots}22\mathbf{1}01\ldots & 1773 \\
22 & 0.00{\ldots}00\mathbf{1}01\ldots & 95 & 57 & 0.00{\ldots}00\mathbf{1}20\ldots & 683 & 92 & 0.00{\ldots}00\mathbf{1}11\ldots & 1807 \\
23 & 0.00{\ldots}22\mathbf{1}01\ldots & 110 & 58 & 0.00{\ldots}02\mathbf{1}12\ldots & 709 & 93 & 0.00{\ldots}00\mathbf{1}11\ldots & 1847 \\
24 & 0.00{\ldots}22\mathbf{1}22\ldots & 118 & 59 & 0.00{\ldots}02\mathbf{1}02\ldots & 738 & 94 & 0.00{\ldots}00\mathbf{1}01\ldots & 1895 \\
25 & 0.00{\ldots}22\mathbf{1}02\ldots & 131 & 60 & 0.00{\ldots}00\mathbf{1}02\ldots & 758 & 95 & 0.00{\ldots}00\mathbf{1}00\ldots & 1928 \\
26 & 0.00{\ldots}00\mathbf{1}01\ldots & 135 & 61 & 0.00{\ldots}00\mathbf{1}02\ldots & 784 & 96 & 0.00{\ldots}20\mathbf{1}21\ldots & 1972 \\
27 & 0.00{\ldots}00\mathbf{1}00\ldots & 146 & 62 & 0.00{\ldots}02\mathbf{1}00\ldots & 812 & 97 & 0.00{\ldots}00\mathbf{1}02\ldots & 2018 \\
28 & 0.00{\ldots}00\mathbf{1}20\ldots & 158 & 63 & 0.00{\ldots}20\mathbf{1}22\ldots & 841 & 98 & 0.00{\ldots}02\mathbf{1}00\ldots & 2055 \\
29 & 0.00{\ldots}00\mathbf{1}20\ldots & 170 & 64 & 0.00{\ldots}00\mathbf{1}22\ldots & 865 & 99 & 0.00{\ldots}00\mathbf{1}01\ldots & 2096 \\
30 & 0.00{\ldots}00\mathbf{1}01\ldots & 182 & 65 & 0.00{\ldots}02\mathbf{1}21\ldots & 894 & 100 & 0.00{\ldots}00\mathbf{1}00\ldots & 2139 \\
31 & 0.00{\ldots}00\mathbf{1}10\ldots & 195 & 66 & 0.00{\ldots}20\mathbf{1}10\ldots & 923 & 101 & 0.00{\ldots}00\mathbf{1}21\ldots & 2183 \\
32 & 0.00{\ldots}22\mathbf{1}02\ldots & 212 & 67 & 0.00{\ldots}00\mathbf{1}00\ldots & 949 & 102 & 0.00{\ldots}00\mathbf{1}22\ldots & 2227 \\
33 & 0.00{\ldots}00\mathbf{1}02\ldots & 222 & 68 & 0.00{\ldots}00\mathbf{1}00\ldots & 978 & 103 & 0.00{\ldots}00\mathbf{1}02\ldots & 2271 \\
34 & 0.00{\ldots}00\mathbf{1}01\ldots & 236 & 69 & 0.00{\ldots}00\mathbf{1}21\ldots & 1008 & 104 & 0.00{\ldots}00\mathbf{1}12\ldots & 2316 \\
35 & 0.00{\ldots}00\mathbf{1}20\ldots & 251 & 70 & 0.00{\ldots}00\mathbf{1}22\ldots & 1038 & 105 & 0.00{\ldots}00\mathbf{1}01\ldots & 2361 \\
\end{array}
\]
}
\vspace*{12pt}
\caption{Ternary verification for Fibonacci products \(a_n=\prod_{k=1}^n F_k\), \(1\le n\le 105\).
The column ``\(1\)st pos.''\  gives the position of the first ternary digit equal to \(1\); a dash indicates that \(\frac{1}{a_n}\in C\).}
\label{tab:fibonacci-products}
\end{table}

\begin{remark}\label{rem:fibonacci-margin}
The comparison \(\frac{5}{6}>\log_2\varphi\) leaves a positive but relatively narrow linear margin.
Accordingly, we observe that once the additive constants \(\log_2(c_{m,D})\) and the overhead \(t_2(m)\) are included, the explicit cutoff \(n_0\) furnished by \cref{cor:fixed-prime-criterion-largest-prime} may be large in concrete instances.

Indeed, the argument in the case of reciprocals of Fibonacci products suggests a broader (and genuinely marginal) class for the criterion.
Let \((u_k)_{k\ge 1}\) be a non-degenerate integer linear recurrence with a dominant root \(\lambda\) (so \(|u_k|\asymp |\lambda|^k\)).
Then every prime divisor of \(\prod_{k=1}^n u_k\) is at most \(\max_{k\le n}|u_k|\ll |\lambda|^n\), giving a prime-factor bound \(\beta(n)\ll |\lambda|^n\) and hence the right side of \eqref{eq:largest-prime-cutoff} grows like \(n\log_{p_0}|\lambda|\).
If one can find a fixed prime \(p_0\nmid m\) and an explicit linear lower bound \(\sum_{k=1}^n \vpp{p_0}(u_k)\ge cn\) for some \(c>0\), then \cref{cor:fixed-prime-criterion-largest-prime} can apply precisely when the number-theoretic inequality
\[
c>\log_{p_0}|\lambda|
\]
holds (up to the additive constants \(t(m,p_0)\) and \(\log_{p_0}(c_{m,D})\)).
Depending on the recurrence and the choice of \(p_0\), the comparison can go either way, making products of linear-recurrence values a natural boundary regime for the method.
\end{remark}

\subsection{Products of \(m^k-1\)}\label{sec:structural-example}

The preceding four applications all fit within the domain of the largest-prime-factor corollary. We now give a structural example showing that the full \cref{thm:fixed-prime-criterion-structural} adds non-trivial additional power: In the case of an increasing-degree product, \cref{thm:fixed-prime-criterion-structural} can apply because the radical-order term is straightforward to control, while the naive largest-prime-factor comparison has the wrong growth rate.

\begin{proposition}[Reciprocals of products of \(m^k-1\)]\label{prop:mk-minus-one}
Fix \(m\ge 3\) and a missing-digit set \(K_{m,D}\), and take
\[
a_n=\prod_{k=1}^n (m^k-1).
\]
Then, the intersection \(\left\{\frac{1}{a_n}:n\in\N\right\}\cap K_{m,D}\) is finite.
\end{proposition}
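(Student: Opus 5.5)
We apply \cref{thm:fixed-prime-criterion-structural} directly. Every factor \(m^k-1\) is coprime to \(m\), so \(Q_n=a_n\).

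\textbf{Choice of prime.} Fix any prime \(p_0\nmid m\); \(p_0=2\) is available when \(m\) is odd, and in general any prime not dividing \(m\) works. Put \(d=\ord_{p_0}(m)\).

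\textbf{Lower bound \(\alpha(n)\).} For every \(k\) with \(d\mid k\) we have \(p_0\mid m^k-1\). Hence
\[
\vpp{p_0}(Q_n)\ge \#\{k\le n: d\mid k\}=\lfloor n/d\rfloor.
\]
Lifting the exponent would give more, but linear growth already suffices, so we take \(\alpha(n)=\lfloor n/d\rfloor\).

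\textbf{Upper bound \(\gamma(n)\).} This is the structural step. Let \(p\mid Q_n\) be prime. Then \(p\mid m^k-1\) for some \(k\le n\), so \(\ord_p(m)\mid k\) and in particular \(\ord_p(m)\le n\). Now \(\ord_{\rad(Q_n)}(m)=\lcm_{p\mid Q_n}\ord_p(m)\), which gives
\[
\vpp{p_0}\bigl(\ord_{\rad(Q_n)}(m)\bigr)=\max_{p\mid Q_n}\vpp{p_0}(\ord_p(m))\le \max_{1\le k\le n}\vpp{p_0}(k)\le \log_{p_0} n .
\]
Alternatively, \(\rad(Q_n)\mid \prod_{k\le n}(m^k-1)\) and \(m^{\lcm(1,\dots,n)}\equiv1\) modulo each factor, so \(\ord_{\rad(Q_n)}(m)\mid\lcm(1,\dots,n)\). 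Either way we may take \(\gamma(n)=\log_{p_0}n\).

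By contrast, \(\Pplus(Q_n)\) can be as large as \(m^n\). The coarse bound \(\log_{p_0}(\beta(n)-1)\approx n\log_{p_0}m\) would then swamp \(\alpha(n)\approx n/d\), since \(d\le p_0-1<p_0\le m^{d}\). This is why \cref{cor:fixed-prime-criterion-largest-prime} is unsuitable here and the structural form is needed.

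\textbf{Conclusion.} The cutoff \eqref{eq:structural-cutoff} becomes
\[
\lfloor n/d\rfloor>t(m,p_0)+\log_{p_0}n+\log_{p_0}(c_{m,D}).
\]
The left side is linear in \(n\) and the right side is \(O(\log n)\), so the inequality holds for all large \(n\), and \cref{thm:fixed-prime-criterion-structural} gives finiteness. The only real point to check is the inequality \(\ord_p(m)\le n\) for every prime \(p\mid Q_n\); it follows immediately from the definition of the order, so the argument is straightforward once the order structure is used in place of \(\Pplus\).
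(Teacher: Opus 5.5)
Your proof is correct and follows the paper's argument. Both proofs take $Q_n=a_n$, bound $\vpp{p_0}(Q_n)$ from below by counting multiples of $\ord_{p_0}(m)$ up to $n$, use $\ord_p(m)\mid k\le n$ (equivalently $\ord_{\rad(Q_n)}(m)\mid\lcm(1,\dots,n)$) to get $\vpp{p_0}\bigl(\ord_{\rad(Q_n)}(m)\bigr)\le\log_{p_0}n$, and then apply \cref{thm:fixed-prime-criterion-structural}. The only difference is the choice of prime: the paper takes $p_0$ to be an odd prime divisor of $m^2+1$, so that $\ord_{p_0}(m)=4$ and $\alpha(n)=\lfloor n/4\rfloor$, which it then uses in the explicit $m=3$, $p_0=5$ computation, whereas you allow any prime $p_0\nmid m$, which works equally well for finiteness.
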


\begin{proof}
Since \((m,m^k-1)=1\) for every \(k\), we have \(Q_n=a_n\).

Now, let \(p_0\) be any odd prime divisor of \(m^2+1\). (Such a prime exists because \(m^2+1>2\) and is not a power of \(2\) for \(m\ge 3\).)
Because \(p_0\mid m^2+1\), we have \(m^2\equiv -1\pmod{p_0}\), so \(\ord_{p_0}(m)=4\).
In particular,
\[
p_0\mid m^k-1 \iff 4\mid k.
\]
Hence, we have
\[
\vpp{p_0}(Q_n)
=
\sum_{k=1}^n \vpp{p_0}(m^k-1)
\ge
\#\{1\le k\le n:4\mid k\}
=
\sum_{k=1}^n \mathbf{1}_{4\mid k}
=
\left\lfloor\frac{n}{4}\right\rfloor.
\]

Now let \(p\mid Q_n\). Then \(p\mid m^k-1\) for some \(k\le n\), so \(\ord_p(m)\mid k\).
Therefore
\[
\ord_{\rad(Q_n)}(m)\mid \lcm(1,2,\dots,n),
\]
and hence
\[
\vpp{p_0}\bigl(\ord_{\rad(Q_n)}(m)\bigr)
\le
\vpp{p_0}\bigl(\lcm(1,2,\dots,n)\bigr)
=
\left\lfloor \log_{p_0} n\right\rfloor.
\]

Thus, \cref{thm:fixed-prime-criterion-structural} applies with
\[
\alpha(n)=\left\lfloor\frac{n}{4}\right\rfloor,
\qquad
\gamma(n)=\left\lfloor \log_{p_0} n\right\rfloor.
\qedhere
\]
\end{proof}

As with our other applications, we can make \cref{prop:mk-minus-one} completely explicit for the standard Cantor set.

\begin{corollary}[Reciprocals of products of \(3^k-1\) in the standard Cantor set]
For \(C=K_{3,\{0,2\}}\) and
\[
a_n=\prod_{k=1}^n (3^k-1),
\]
we have
\[
\left\{\frac{1}{a_n}:n\in\N\right\}\cap C
=
\emptyset.
\]
\end{corollary}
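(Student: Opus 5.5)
The plan is to specialize the proof of \cref{prop:mk-minus-one} to \(m=3\), \(D=\{0,2\}\), extract an explicit cutoff from \cref{thm:fixed-prime-criterion-structural}, and then settle the remaining finitely many \(n\) by the exact test of \cref{lem:exact-base-m-test}.

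\emph{Choice of parameters.} Here \(m^2+1=10\), so the odd prime divisor of \(m^2+1\) is \(p_0=5\), with \(\ord_5(3)=4\). The overhead parameter from \eqref{eq:overhead-parameter} is
\[
t=\vpp{5}(3^4-1)=\vpp{5}(80)=1.
\]
We also have \(c_{3,D}=4\), so \(\log_5(c_{3,D})=\log_5 4<0.87\). Since \(\gcd(3,a_n)=1\), we get \(Q_n=a_n\). The proof of \cref{prop:mk-minus-one} then gives the inputs
\[
\alpha(n)=\lfloor n/4\rfloor,
\qquad
\gamma(n)=\lfloor\log_5 n\rfloor .
\]
If more room is needed, the lifting-the-exponent identity \(\vpp{5}(3^k-1)=1+\vpp{5}(k)\) for \(4\mid k\) would improve \(\alpha\), but I expect the crude bound to suffice.

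\emph{Cutoff.} The quantities \(\alpha(n)\) and \(\gamma(n)\) are integers and \(0<\log_5 4<1\). Hence \eqref{eq:structural-cutoff} is equivalent to
\[
\lfloor n/4\rfloor\ \ge\ \lfloor\log_5 n\rfloor+2 .
\]
At \(n=12\) both sides equal \(3\), so the inequality holds; at \(n=11\) it fails (\(2<3\)). To show it persists for all \(n\ge12\), I would argue as follows. The left side increases by \(1\) on every block of four consecutive integers. The right side increases by \(1\) only at powers of \(5\), and consecutive powers \(5^j\) and \(5^{j+1}\) with \(5^j\ge 25\) are separated by far more than four steps. A short induction over the intervals \([5^j,5^{j+1})\), checking the first one \([12,25)\) directly, then gives the inequality for all \(n\ge12\). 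By \cref{thm:fixed-prime-criterion-structural}, \(\frac{1}{a_n}\notin C\) for every \(n\ge 12\).

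\emph{Finite check.} For \(1\le n\le 11\), each \(a_n\) is even, since \(3^k-1\) is even, and coprime to \(3\). So \(1/a_n\) is never \(1\), never terminating, and \cref{lem:exact-base-m-test} decides membership exactly. For example, \(1/a_1=1/2=0.\overline{1}\) fails immediately. For the other values of \(n\), I would run the long-division recurrence and record the position of the first ternary digit \(1\) in a table like the earlier ones, following the protocol of \cref{rem:finite-checks}. I expect this step to be the main obstacle, though only computationally. The denominators grow like \(3^{n(n+1)/2}\), so \(a_{11}\) is about \(3^{66}\), and the full period \(\ord_{a_n}(3)\) is huge. In practice the digit \(1\) should appear within roughly the first \(\log_3 a_n+O(1)\) digits, so the exact-arithmetic loop stops early. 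If some small \(n\) unexpectedly survived, I would recheck it carefully, since the claimed intersection is empty.
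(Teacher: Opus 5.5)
Your proposal is correct and follows the paper's proof essentially step for step. It uses the same choice $p_0=5$, $t=1$, $c_{3,D}=4$, the same inputs $\alpha(n)=\lfloor n/4\rfloor$ and $\gamma(n)=\lfloor\log_5 n\rfloor$, the same cutoff $n\ge 12$, and the same exact long-division test for $1\le n\le 11$. Your integrality reduction to $\lfloor n/4\rfloor\ge\lfloor\log_5 n\rfloor+2$ is a tidy substitute for the paper's split into $12\le n\le 24$ plus a monotone auxiliary function on $[25,\infty)$.

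The one thing you leave unexecuted is the finite check for $2\le n\le 11$; the paper records its outcome in \cref{tab:mk-minus-one}. This check can even be done by hand. Put $N=n(n+1)/2$. Then $1<3^N/a_n=\prod_{k=1}^n(1-3^{-k})^{-1}<2$, because $\prod_{k=1}^n(1-3^{-k})>1-\sum_{k\ge 1}3^{-k}=\tfrac12$. Hence the first $N-1$ ternary digits of $1/a_n$ vanish and the $N$-th digit equals $1$, which in fact proves the corollary directly for every $n$.
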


\begin{proof}
We have \(m=3\) and \(D=\{0,2\}\), so \(c_{3,D}=4\).
Since \(3^2+1=10=2\cdot 5\), we take \(p_0=5\) as in the proof of \cref{prop:mk-minus-one}.
Then \(\ord_5(3)=4\) and
\[
t=\vpp{5}(3^4-1)=\vpp{5}(80)=1.
\]
\Cref{thm:fixed-prime-criterion-structural} applies with
\[
\alpha(n)=\left\lfloor\frac{n}{4}\right\rfloor,
\qquad
\gamma(n)=\left\lfloor\log_5(n)\right\rfloor.
\]

For \(12\le n\le 24\), we have \(\lfloor \frac{n}{4}\rfloor\ge 3\) and \(\lfloor \log_5(n)\rfloor=1\), so
\[
\left\lfloor\frac{n}{4}\right\rfloor
\ge 3
>
1+1+\log_5 4.
\]
For \(n\ge 25\), define
\[
g(x)\defeq \frac{x}{4}-2-\log_5 x-\log_5 4.
\]
Then
\[
g'(x)=\frac{1}{4}-\frac{1}{x\ln 5}>0
\qquad (x\ge 25),
\]
so \(g\) is increasing on \([25,\infty)\). Since
\[
g(25)=\frac{25}{4}-2-\log_5 25-\log_5 4
=\frac{25}{4}-4-\log_5 4
=2.25-\log_5 4>0,
\]
we see that \(g(n)>0\) for all \(n\ge 25\); using the inequalities \(\lfloor \frac{n}{4}\rfloor\ge \frac{n}{4}-1\) and \(\lfloor \log_5(n)\rfloor\le \log_5(n)\), this implies that
\[
\left\lfloor\frac{n}{4}\right\rfloor
>
1+\left\lfloor\log_5(n)\right\rfloor+\log_5 4
\qquad (n\ge 25).
\]
Therefore, \eqref{eq:structural-cutoff} holds for every \(n\ge 12\), and hence
\[
\frac{1}{a_n}\notin C
\qquad
(n\ge 12).
\]

It remains to inspect the cases \(1\le n\le 11\).
Since \(\gcd(3,3^k-1)=1\) for every \(k\), no \(a_n\) is divisible by \(3\), so every \(\frac{1}{a_n}\) lies outside the terminating regime and \cref{lem:exact-base-m-test} applies directly. Applying \cref{lem:exact-base-m-test} to each of the eleven reciprocals in question shows that every ternary expansion contains the digit \(1\); see \cref{tab:mk-minus-one}. Thus the intersection is empty.
\end{proof}

\begin{table}[tp]
\centering

\renewcommand{\arraystretch}{1.15}
\begin{tabular}{rlr}
\toprule
$n$ & base-$3$ prefix of $\tfrac{1}{a_n}$ & first $1$ pos.\\
\midrule
$1$ & $0.\mathbf{1}11\ldots$ & $1$ \\
$2$ & $0.00\mathbf{1}20\ldots$ & $3$ \\
$3$ & $0.00000\mathbf{1}20\ldots$ & $6$ \\
$4$ & $0.000000000\mathbf{1}20\ldots$ & $10$ \\
$5$ & $0.00000000000000\mathbf{1}21\ldots$ & $15$ \\
$6$ & $0.00{\ldots}00\mathbf{1}21\ldots$ & $21$ \\
$7$ & $0.00{\ldots}00\mathbf{1}21\ldots$ & $28$ \\
$8$ & $0.00{\ldots}00\mathbf{1}21\ldots$ & $36$ \\
$9$ & $0.00{\ldots}00\mathbf{1}21\ldots$ & $45$ \\
$10$ & $0.00{\ldots}00\mathbf{1}21\ldots$ & $55$ \\
$11$ & $0.00{\ldots}00\mathbf{1}21\ldots$ & $66$ \\
\bottomrule
\end{tabular}

\vspace*{12pt}
\caption{Ternary verification for products \(a_n=\prod_{k=1}^n(3^k-1)\), \(1\le n\le 11\).
Column conventions are as in \cref{tab:factorials}.}
\label{tab:mk-minus-one}
\end{table}

\begin{remark}\label{rem:mk-minus-one-structural-vs-largest-prime}
The family in \cref{prop:mk-minus-one} shows that the full structural criterion \cref{thm:fixed-prime-criterion-structural} can be materially easier to verify than the coarse largest-prime-factor packaging \cref{cor:fixed-prime-criterion-largest-prime}. Indeed, the trivial bound
\[
\Pplus(Q_n)\le m^n-1
\]
would make the right side of \eqref{eq:largest-prime-cutoff} grow linearly like \(n\log_{p_0}m\), whereas here
\[
\nu_{p_0}\bigl(\ord_{\rad(Q_n)}(m)\bigr)\le \lfloor \log_{p_0}n\rfloor
\]
is only logarithmic in \(n\). For example, when \(m=3\) and \(p_0=5\), the naive \(P^+\)-packaging has linear coefficient \(\log_5 3\approx 0.683\), while the lower bound in \cref{prop:mk-minus-one} has coefficient \(\frac{1}{4}\); thus, this route cannot recover \cref{prop:mk-minus-one} from the displayed estimates.

What drives the difference is that the prime-size information and the order information are on very different scales in this example. A prime divisor \(p\mid m^k-1\) may itself be as large as \(m^k-1\), but its contribution to the order structure is constrained by \(\ord_p(m)\mid k\). After taking the least common multiple over \(k\le n\), this yields an order bound through \(\lcm(1,\dots,n)\), which is logarithmic at the \(p_0\)-adic level even though the individual prime factors may be exponentially large.
\end{remark}

\section{Discussion}

\Cref{thm:fixed-prime-criterion-structural} isolates a specific reusable arithmetic pattern building on the approach of~\cite{LWY}. The imported ingredients are the comparison between \(\ord_Q(m)\) and \(\ord_{\rad(Q)}(m)\) extracted from Korobov's digit-distribution theorem, and the standard order-lifting facts that force \(\ord_Q(m)\) to acquire \(p_0\)-power divisibility when \(Q\) carries a large power of a fixed prime \(p_0\nmid m\). The theorem abstracts these into a denominator obstruction measured by
\[
\vpp{p_0}\bigl(\ord_{\rad(Q)}(m)\bigr),
\]
i.e., the \(p_0\)-adic cost contributed by the radical of the \(m\)-coprime denominator; this radical-order term is the natural invariant because it is exactly what remains after the Korobov comparison and the order-lifting lower bound are combined.

The applications in \cref{sec:applications} illustrate two levels of use. In the first four families, an estimate in terms of \(P^+(Q_n)\) already suffices. The family \(\prod_{k=1}^n(m^k-1)\) serves a different role: it shows that the radical-order term can remain logarithmic even when the most straightforward \(P^+\)-bound is exponential.

We close by noting a few ``non-examples'' illustrating two kinds of limitations of the fixed-prime method:

\begin{itemize}
\item For the \textbf{primorial sequence} \(a_n=p_n^\#=\prod_{p\le p_n} p\) (where \(p_n\) is the \(n\)-th prime and the product runs over primes), the criterion in \cref{thm:fixed-prime-criterion-structural} cannot apply for any fixed prime \(p_0\nmid m\). Indeed, \(a_n\) is squarefree, and removing the \(m\)-part preserves squarefreeness, so \(Q_n\) is squarefree as well. Hence for any fixed \(p_0\),
\[
\vpp{p_0}(Q_n)\le 1
\qquad (n\ge 1).
\]
Therefore any admissible lower bound in \cref{thm:fixed-prime-criterion-structural} must satisfy \(\alpha(n)\le 1\) on every tail. On the other hand, the structural cutoff demands
\[
\alpha(n)>t+\gamma(n)+\log_{p_0}(c_{m,D})
\]
for all sufficiently large \(n\), where necessarily \(\gamma(n)\ge 0\), \(t\ge 1\), and \(c_{m,D}\ge 4\). Thus the right side is always \(>1\).

\item Our fixed-prime criterion also cannot be used to establish finiteness for the \textbf{central binomial coefficients} \(a_n=\binom{2n}{n}\). Indeed, since \(p_0\nmid m\), removing the \(m\)-part does not change the \(p_0\)-adic valuation:
\[
\vpp{p_0}(Q_n)=\vpp{p_0}\left(\binom{2n}{n}\right).
\]
By Kummer's theorem \cite{kummer1852erganzungssatze}, \(\left(\vpp{p_0}\binom{2n}{n}\right)\) equals the number of carries when adding \(n+n\) in base \(p_0\). In particular, along the subsequence \(n=p_0^k\), we have
\[
\vpp{p_0}\left(\binom{2n}{n}\right)
=
\begin{cases}
0 & p_0 \text{ odd},\\
1 & p_0=2.
\end{cases}
\]
Hence, any admissible lower bound \(\alpha(n)\le \vpp{p_0}(Q_n)\) is upper-bounded by \(1\) along the infinite subsequence \(n=p_0^k\). But again the structural cutoff requires
\[
\alpha(n)>t+\gamma(n)+\log_{p_0}(c_{m,D}),
\]
with \(\gamma(n)\ge 0\), \(t\ge 1\), and \(c_{m,D}\ge 4\), so the right side is always \(>1\).
\end{itemize}

For primorials, squarefreeness prevents any fixed-prime valuation from growing beyond \(1\), so the structural cutoff can never be met. Central binomial coefficients fail for a different reason: along an infinite subsequence, every fixed \(p_0\)-adic valuation stays bounded by \(1\).

The family \(\prod_{k=1}^n(m^k-1)\) suggests that future refinements should be formulated first at the structural level, since the decisive order term can remain logarithmic even when the largest prime factor may be large. It would be interesting to develop variants of the criterion that allow a controlled choice of prime depending on \(n\), or to combine Korobov's estimate with additional structure in the factorization of \(a_n\). Even in cases where \cref{thm:fixed-prime-criterion-structural} fails, Korobov-type digit-distribution constraints may still interact fruitfully with other Diophantine inputs---for instance, primitive divisor results for recurrence sequences, or lower bounds for large prime factors of structured products.

\providecommand{\bysame}{\leavevmode\hbox to3em{\hrulefill}\thinspace}
\providecommand{\MR}{\relax\ifhmode\unskip\space\fi MR }
\providecommand{\MRhref}[2]{%
  \href{http://www.ams.org/mathscinet-getitem?mr=#1}{#2}
}
\providecommand{\href}[2]{#2}

\newpage
\appendix
\section{Computational verification}\label{app:computational-verification}

The following Python script carries out the finite verifications underlying the explicit intersection results in \cref{sec:applications} and generates Tables~\ref{tab:factorials}--\ref{tab:mk-minus-one}. For each sequence family, the cutoff \(N_0\) beyond which \cref{thm:fixed-prime-criterion-structural} (or \cref{cor:fixed-prime-criterion-largest-prime}) excludes membership is proved in the main text and supplied to the script as a constant; the program then tests every index \(n<N_0\) by performing the base-\(m\) long division of \(\frac{1}{a_n}\) via the remainder recurrence of \cref{lem:exact-base-m-test}.

\medskip
\begin{lstlisting}[language=Python]
#!/usr/bin/env python3
# -*- coding: utf-8 -*-
r"""
cantor-intersection-verification.py

Computational appendix for "A Fixed-Prime Criterion for\\Reciprocals in Missing-Digit Sets"

For each sequence family in Section 4, this script tests every
1 <= n < N_0 exactly (Lemma 4.1) and emits a LaTeX table (booktabs)
suitable for \input{}.

All arithmetic is exact (Python arbitrary-precision integers).
Requires Python >= 3.9; standard library only.

Specialised to the middle-third Cantor set C = K_{3,{0,2}}.

Paper parameters
----------------
  m = 3,  D = {0,2},  c_{m,D} = 4.
  p_0 = 2 ->  t(3,2) = v_2(3^2 - 1) - 1 = 2.       [Lemma 2.4]
  p_0 = 5 ->  t(3,5) = v_5(3^4 - 1) = v_5(80) = 1.  [Lemma 2.1]

Cutoffs N_0 (proved in Section 4)
---------------------------------
  Factorials          1/n!                 : N_0 = 10
  Superfactorials     1/prod_{k<=n} k!     : N_0 = 5
  Polynomial products 1/prod_{k<=n}(k^2+1) : N_0 = 30
  Fibonacci products  1/prod_{k<=n} F_k    : N_0 = 106
  Products of 3^k-1   1/prod_{k<=n}(3^k-1) : N_0 = 12
  
"""

from __future__ import annotations

import math
import sys
from typing import AbstractSet, List, Optional, Tuple

# ================================================================
#  Constants
# ================================================================

M: int = 3                                     # base
D_SET: frozenset[int] = frozenset({0, 2})      # allowed digits

N0_FACT:  int = 10     # factorials
N0_SFACT: int = 5      # superfactorials
N0_POLY:  int = 30     # polynomial products  (f = x^2 + 1)
N0_FIB:   int = 106    # Fibonacci products
N0_3K:    int = 12     # products of 3^k - 1

# Display parameters for abbreviated non-member prefixes
HEAD  = 2    # leading digits shown after the decimal point
TAIL  = 2    # digits shown immediately before the first bad digit
AFTER = 2    # digits shown immediately after the first bad digit


# ================================================================
#  Arithmetic helpers
# ================================================================

def _prime_factors(n: int) -> List[int]:
    """Distinct prime factors of  n >= 2,  in increasing order."""
    factors: List[int] = []
    d = 2
    while d * d <= n:
        if n % d == 0:
            factors.append(d)
            while n % d == 0:
                n //= d
        d += 1
    if n > 1:
        factors.append(n)
    return factors


# ================================================================
#  Generalised membership test  (arbitrary K_{m, D})
# ================================================================

def generalized_membership(a_n: int, m: int,
                           allowed: AbstractSet[int]
                           ) -> Tuple[bool, Optional[int]]:
    r"""
    Decide whether  1/a_n \in K_{m, D}  where  D = *allowed digits*.

    For terminating base-m expansions both the terminating and the
    non-terminating representations are checked, so the function
    is correct for composite bases as well as prime ones.

    Parameters
    ----------
    a_n     : positive integer
    m       : base (integer >= 2)
    allowed : set of allowed digits, each in {0, 1, ..., m-1}

    Returns
    -------
    (True,  None)  if  1/a_n \in K_{m, D}.
    (False, pos )  otherwise;  *pos*  (1-indexed) is the position
                   of the first disallowed digit.
    """
    if a_n <= 0:
        raise ValueError(f"a_n must be positive, got {a_n}")
    if m < 2:
        raise ValueError(f"base must be >= 2, got {m}")

    # 1/1 = 0.(m-1)(m-1)(m-1)...
    if a_n == 1:
        return (True, None) if (m - 1) in allowed else (False, 1)

    # --- Does 1/a_n terminate in base m? ---
    # Terminates iff every prime factor of a_n also divides m.
    reduced = a_n
    for p in _prime_factors(m):
        while reduced % p == 0:
            reduced //= p
    can_terminate = (reduced == 1)

    if can_terminate:
        # ------ Terminating expansion (finitely many digits) ------
        r: int = 1
        digits: List[int] = []
        first_one: Optional[int] = None
        while r != 0:
            q, r = divmod(m * r, a_n)
            digits.append(q)
            if q not in allowed and first_one is None:
                first_one = len(digits)

        # Terminating form:  d_1 ... d_L  0 0 0 ...
        if first_one is None and 0 in allowed:
            return True, None

        # Non-terminating form:  d_1 ... d_{k-1}  (d_k - 1)  (m-1)(m-1)...
        last_nz = -1
        for i in range(len(digits) - 1, -1, -1):
            if digits[i] != 0:
                last_nz = i
                break
        if last_nz >= 0 and (m - 1) in allowed:
            alt_d = digits[last_nz] - 1
            if (all(digits[i] in allowed for i in range(last_nz))
                    and alt_d in allowed):
                return True, None

        # Neither representation works.
        term_fb = first_one if first_one is not None else len(digits) + 1
        nt_fb: Optional[int] = None
        if last_nz >= 0:
            for i in range(last_nz):
                if digits[i] not in allowed:
                    nt_fb = i + 1
                    break
            if nt_fb is None:
                if digits[last_nz] - 1 not in allowed:
                    nt_fb = last_nz + 1
                elif (m - 1) not in allowed:
                    nt_fb = last_nz + 2
        candidates = [c for c in [term_fb, nt_fb] if c is not None]
        return False, min(candidates)

    else:
        # ------ Non-terminating expansion (unique representation) ------
        # Exit immediately on first disallowed digit.
        # Confirm membership only when the remainder cycle closes.
        r = 1
        seen: set = set()
        pos = 0
        while True:
            if r in seen:
                return True, None
            seen.add(r)
            q, r = divmod(m * r, a_n)
            pos += 1
            if q not in allowed:
                return False, pos


def cantor_membership(a_n: int) -> Tuple[bool, Optional[int]]:
    r"""Decide whether  1/a_n \in C = K_{3,\{0,2\}}."""
    return generalized_membership(a_n, M, D_SET)


# ================================================================
#  LaTeX display helpers
# ================================================================

def _base_m_prefix(a_n: int, m: int, allowed: AbstractSet[int],
                   first_one: Optional[int],
                   max_show: int = 20) -> str:
    r"""
    Short LaTeX string for the initial base-m digits of  1/a_n.

    Non-members: when the first disallowed digit is near the
    start, all digits through a short context window are shown.
    When it is far away, an abbreviated form is used:

        0.ab{\ldots}cd\mathbf{1}ef\ldots

    where  ab  are the first HEAD digits,  cd  are the TAIL digits
    immediately preceding the bold disallowed digit, and  ef  are
    the AFTER digits immediately following it.

    Members: compact  \overline{}  notation.
    Assumes  m <= 10  for single-character digit display.
    """
    # --- a_n = 1 ---
    if a_n == 1:
        d = str(m - 1)
        if first_one is None:
            return r"0.\overline{" + d + "}"
        return r"0.\mathbf{" + d + "}"

    # --- Non-member: compute the needed digits ---
    if first_one is not None:
        need = first_one + AFTER
        r_val = 1
        digits: List[int] = []
        for _ in range(need):
            q, r_val = divmod(m * r_val, a_n)
            digits.append(q)
            if r_val == 0:
                break

        idx = first_one - 1                      # 0-indexed
        end = min(first_one + AFTER, len(digits))

        if end <= max_show:
            # Short form: show every digit through the context window
            parts = [str(d) for d in digits[:end]]
            parts[idx] = rf"\mathbf{{{digits[idx]}}}"
            return "0." + "".join(parts) + r"\ldots"

        # Abbreviated form:
        #   head_digits {\ldots} tail_digits \mathbf{d} after_digits \ldots
        h = min(HEAD, idx)
        hs = "".join(str(digits[i]) for i in range(h))
        ts = max(h, idx - TAIL)
        tl = "".join(str(digits[i]) for i in range(ts, idx))
        bd = rf"\mathbf{{{digits[idx]}}}"
        af = "".join(str(digits[i]) for i in range(idx + 1, end))
        return "0." + hs + r"{\ldots}" + tl + bd + af + r"\ldots"

    # --- Member: detect cycle for compact display ---
    reduced = a_n
    for p in _prime_factors(m):
        while reduced % p == 0:
            reduced //= p

    if reduced == 1:
        # Terminating -> show non-terminating form
        r_val = 1
        digs: List[int] = []
        while r_val != 0:
            q, r_val = divmod(m * r_val, a_n)
            digs.append(q)
        last_nz = len(digs) - 1
        while last_nz >= 0 and digs[last_nz] == 0:
            last_nz -= 1
        if last_nz >= 0:
            pre = "".join(str(d) for d in digs[:last_nz])
            return rf"0.{pre}{digs[last_nz] - 1}\overline{{{m - 1}}}"
        return r"0.\overline{" + str(m - 1) + "}"

    # Non-terminating: find cycle
    r_val = 1
    seen_r: dict[int, int] = {}
    digs = []
    for i in range(max_show + 100):
        if r_val in seen_r:
            cs = seen_r[r_val]
            pre = "".join(str(d) for d in digs[:cs])
            rep = "".join(str(d) for d in digs[cs:])
            if pre:
                return rf"0.{pre}\overline{{{rep}}}"
            return rf"0.\overline{{{rep}}}"
        seen_r[r_val] = i
        q, r_val = divmod(m * r_val, a_n)
        digs.append(q)

    return "0." + "".join(str(d) for d in digs[:max_show]) + r"\ldots"


def _ternary_prefix(a_n: int, first_one: Optional[int],
                    max_show: int = 20) -> str:
    """LaTeX string for the initial ternary digits of  1/a_n."""
    return _base_m_prefix(a_n, M, D_SET, first_one, max_show)


# ================================================================
#  Table generation and file output
# ================================================================

Row = Tuple[int, bool, Optional[int], str]


def build_latex(title: str, rows: List[Row]) -> str:
    """Build a LaTeX tabular string from the given rows."""
    lines = [
        f"% {title}",
        r"\renewcommand{\arraystretch}{1.15}",
        r"\begin{tabular}{rlr}",
        r"\toprule",
        (r"$n$ & base-$3$ prefix of $\tfrac{1}{a_n}$"
         r" & first $1$ pos.\\"),
        r"\midrule",
    ]
    for n, member, pos, prefix in rows:
        p = r"---" if member else f"${pos}$"
        lines.append(rf"${n}$ & ${prefix}$ & {p} \\")
    lines.append(r"\bottomrule")
    lines.append(r"\end{tabular}")
    return "\n".join(lines)


def emit(title: str, rows: List[Row], filename: str) -> None:
    """Print a summary and table to stdout; write .tex file."""
    latex   = build_latex(title, rows)
    members = [n for n, m, _, _ in rows if m]
    n0      = rows[-1][0] + 1 if rows else 1

    print("=" * 64)
    print(f"  {title}")
    print(f"  Cutoff N_0 = {n0}  |  Members: {members}")
    print("=" * 64)
    print(latex)
    print()

    with open(filename, "w") as f:
        f.write(f"% Auto-generated by cantor_verify.py\n")
        f.write(f"% {title}\n")
        f.write(f"% Include via:  \\[\\input{{{filename}}}\\]\n")
        f.write(f"% For tables exceeding one page, wrap in a "
                f"longtable environment.\n\n")
        f.write(latex + "\n")

    print(f"  => written to {filename}\n\n")


def emit_multicolumn(title: str, rows: List[Row], filename: str,
                     cols: int = 3) -> None:
    """Write a multi-column array for long tables."""
    members = [n for n, m, _, _ in rows if m]
    n0 = rows[-1][0] + 1 if rows else 1

    # Split rows into roughly equal chunks
    k, r = divmod(len(rows), cols)
    chunks: List[List[Row]] = []
    start = 0
    for i in range(cols):
        end = start + k + (1 if i < r else 0)
        chunks.append(rows[start:end])
        start = end

    max_len = max(len(c) for c in chunks)

    # Column spec: rlr separated by inter-group spacing
    col_spec = " @{\\hspace{1.5em}\\vrule width 0.4pt\\hspace{1.5em}} ".join(["rlr"] * cols)

    lines = [
        f"% Auto-generated by cantor_verify.py",
        f"% {title}",
        r"\renewcommand{\arraystretch}{1.05}",
        r"\[",
        rf"\begin{{array}}{{{col_spec}}}",
    ]

    # Header row
    hdr = r"n & \text{prefix of $1/a_n$} & \text{1st pos.}"
    lines.append(" & ".join([hdr] * cols) + r" \\")
    lines.append(r"\hline")

    # Data rows
    for row_idx in range(max_len):
        parts: List[str] = []
        for chunk in chunks:
            if row_idx < len(chunk):
                n, member, pos, prefix = chunk[row_idx]
                p = r"\text{--}" if member else str(pos)
                parts.append(rf"{n} & {prefix} & {p}")
            else:
                parts.append("& &")
        lines.append(" & ".join(parts) + r" \\")

    lines.append(r"\end{array}")
    lines.append(r"\]")

    content = "\n".join(lines)

    print("=" * 64)
    print(f"  {title}")
    print(f"  Cutoff N_0 = {n0}  |  Members: {members}")
    print("=" * 64)

    with open(filename, "w") as f:
        f.write(content + "\n")
    print(f"\n  => written to {filename}\n\n")


# ================================================================
#  Sequence values
# ================================================================

def vals_factorial(nmax: int) -> List[int]:
    """a_n = n!  for n = 0, 1, ..., nmax."""
    v = [1] * (nmax + 1)
    for n in range(1, nmax + 1):
        v[n] = v[n - 1] * n
    return v


def vals_superfactorial(nmax: int) -> List[int]:
    """a_n = prod_{k=1}^n k!  for n = 0, ..., nmax."""
    v = [1] * (nmax + 1)
    f = 1
    for n in range(1, nmax + 1):
        f *= n
        v[n] = v[n - 1] * f
    return v


def vals_poly_x2plus1(nmax: int) -> List[int]:
    """a_n = prod_{k=1}^n (k^2 + 1)  for n = 0, ..., nmax."""
    v = [1] * (nmax + 1)
    for n in range(1, nmax + 1):
        v[n] = v[n - 1] * (n * n + 1)
    return v


def vals_fibonacci_product(nmax: int) -> List[int]:
    """a_n = prod_{k=1}^n F_k  for n = 0, ..., nmax."""
    fib = [0] * (max(nmax + 1, 3))
    fib[1] = fib[2] = 1
    for k in range(3, nmax + 1):
        fib[k] = fib[k - 1] + fib[k - 2]
    v = [1] * (nmax + 1)
    for n in range(1, nmax + 1):
        v[n] = v[n - 1] * fib[n]
    return v


def vals_mk_minus_1(nmax: int, m: int = 3) -> List[int]:
    """a_n = prod_{k=1}^n (m^k - 1)  for n = 0, ..., nmax."""
    v = [1] * (nmax + 1)
    for k in range(1, nmax + 1):
        v[k] = v[k - 1] * (m ** k - 1)
    return v


# ================================================================
#  Driver
# ================================================================

def run(title: str, n0: int, vals: List[int], fname: str) -> None:
    """Test n = 1, ..., n0 - 1 and emit the result table."""
    rows: List[Row] = []
    for n in range(1, n0):
        A = vals[n]
        mem, pos = cantor_membership(A)
        prefix = _ternary_prefix(A, pos)
        rows.append((n, mem, pos, prefix))
    emit(title, rows, fname)


def main() -> None:
    # (a) Factorials: a_n = n!
    run(f"Factorials: a_n = n! [N_0 = {N0_FACT}]",
        N0_FACT, vals_factorial(N0_FACT - 1), "table_factorial.tex")

    # (b) Superfactorials: a_n = prod_{k=1}^n k!
    run(f"Superfactorials: a_n = prod k! [N_0 = {N0_SFACT}]",
        N0_SFACT, vals_superfactorial(N0_SFACT - 1),
        "table_superfactorial.tex")

    # (c) Polynomial products: a_n = prod_{k=1}^n (k^2 + 1)
    run(f"Polynomial products: a_n = prod (k^2+1) [N_0 = {N0_POLY}]",
        N0_POLY, vals_poly_x2plus1(N0_POLY - 1),
        "table_poly_x2plus1.tex")

    # (d) Fibonacci products -- use multi-column layout (105 rows)
    n0 = N0_FIB
    vals = vals_fibonacci_product(n0 - 1)
    rows: List[Row] = []
    for n in range(1, n0):
        A = vals[n]
        mem, pos = cantor_membership(A)
        prefix = _ternary_prefix(A, pos)
        rows.append((n, mem, pos, prefix))
    emit_multicolumn(
        f"Fibonacci products: a_n = prod F_k [N_0 = {n0}]",
        rows, "table_fibonacci.tex", cols=3)

    # (e) Products of m^k - 1: a_n = prod_{k=1}^n (3^k - 1)
    run(f"Products of 3^k - 1 [N_0 = {N0_3K}]",
        N0_3K, vals_mk_minus_1(N0_3K - 1), "table_mk_minus_1.tex")


if __name__ == "__main__":
    main()
\end{lstlisting}

\end{document}